\numberwithin{equation}{section}
\newtheorem{theorem}[equation]{Theorem}
\newtheorem{proposition}[equation]{Proposition}
\newtheorem{lemma}[equation]{Lemma}
\newtheorem{corollary}[equation]{Corollary}
\theoremstyle{definition}
\newtheorem{rmk}[equation]{Remark}
\newenvironment{remark}[1][]{\begin{rmk}[#1] \pushQED{\qed}}{\popQED \end{rmk}}
\newtheorem{eg}[equation]{Example}
\newtheorem{defn}[equation]{Definition}
\newtheorem{ques}[equation]{Question}
\DeclareMathOperator{\codim}{codim}
\DeclareMathOperator{\rank}{rank}
\DeclareMathOperator{\Sym}{Sym}
\DeclareMathOperator{\Gr}{Grass}
\DeclareMathOperator{\GL}{GL}
\DeclareMathOperator{\Sp}{Sp}
\DeclareMathOperator{\SO}{SO}
\DeclareMathOperator{\cha}{char}
\newcommand{\op}{\operatorname}
\newcommand{\oo}{\otimes}
\newcommand{\la}{\lambda}
\newcommand{\al}{\alpha}
\newcommand{\mc}{\mathcal}
\newcommand{\ol}{\overline}
\newcommand{\bb}{\mathbb}
\newcommand{\kk}{\Bbbk}
\begin{document}
\title{Singularities of orthogonal and symplectic determinantal varieties}

\author{Andr\'as Cristian L\H{o}rincz}
\address{University of Oklahoma, David and Judi Proctor Department of Mathematics, Norman, OK 73019}
\email[Andr\'as Cristian L\H{o}rincz]{lorincz@ou.edu}

\begin{abstract}
Let either $\GL(E)\times \op{SO}(F)$ or $\GL(E)\times \op{Sp}(F)$ act naturally on the space of matrices $E\otimes F$. There are only finitely many orbits, and the orbit closures are orthogonal and symplectic generalizations of determinantal varieties, which can be described similarly using rank conditions. In this paper, we study the singularities of these varieties and describe their defining equations. We prove that in the symplectic case, the orbit closures are normal with good filtrations, and  in characteristic $0$ have rational singularities. In the orthogonal case we show that most orbit closures will have the same properties, and determine precisely the exceptions to this.
\end{abstract}

\subjclass[2020]{14M12, 14L30, 13A50, 14B05, 13A35, 14M05}
\keywords{Collapsing of bundles, determinantal varieties, good filtrations, rational singularities, generic perfection}

\maketitle

\setcounter{tocdepth}{2}


\section{Introduction}

The irreducible representations of (connected) reductive groups with finitely many orbits have been classified in \cite{kac} (see also \cite{saki}). The study of the singularities and defining equations of orbit closures in such representations generated a lot of interest over the years (see \cite{weymanbook} and references therein). For example, these representations include the spaces of matrices under the action corresponding to row and column operations, yielding the generic, symmetric, and skew-symmetric determinantal varieties.

Among Kac's list of irreducible representations with finitely many orbits, there are only $3$ infinite series involving more than one parameter. One of them is the generic determinantal case, and the other two are the main subject of this paper  -- in fact, all the generic determinantal varieties appear also within these two cases as orbit closures. Therefore, the two cases under consideration in this paper are the largest series of irreducible representations with finitely many orbits.

Most representations from Kac's list can be constructed by the Vinberg method, using Dynkin diagrams and distinguished nodes. In a program lead by Kra\'skiewicz and Weyman \cite{kraswey1, kraswey2}, the systematic study of singularities and free resolutions of orbit closures of Vinberg representations has been undertaken. Our two representations can be described by the Dynkin diagrams of type $B, C, D$ together with a choice of a node.

\bigskip

We now describe our main setup. Let $E, F$ be vector spaces over an algebraically closed field $\kk$, of respective dimensions $e\geq 1, f \geq 3$. 

Suppose that $F$ is equipped with a non-degenerate bilinear form $\beta$ that is either symmetric (when we assume $\cha \kk \neq 2$) or alternating (with $f$ even). Such a form yields an isomorphism $j: F \cong F^*$. For any map $\phi: E^* \to F$, we consider the composition
\[
 E^* \xrightarrow{\;\; \phi \;\;} F \xrightarrow{\;\; j \;\;} F^* \xrightarrow{\;\; \phi^* \;} E,
 \]
 which yields a $\GL(E)$-equivariant morphism 
 \begin{equation}\label{eq:psi}
 \psi: E\oo F \to E\oo E.
\end{equation}
In fact, when the form $\beta$ is symmetric, $\op{im} \psi \subset \op{S}_2(E)$ (the space of symmetric tensors or matrices), and when $\beta$ is alternating $\op{im} \psi \subset \bigwedge^2 E$ (the space of skew-symmetric tensors or matrices).
 
For a pair of integers $(r_1,r_2)$ satisfying 
\begin{equation}\label{eq:condi}
0\leq r_2 \leq r_1 \leq e,  \quad 2 r_1 - r_2 \leq f, \quad \mbox{if } \beta \mbox{ alternating then } r_2 \mbox{ even},
\end{equation}
and consider the closed subvarieties in $E\oo F$ defined by conditions on the rank and isotropic rank of an element $\phi$:
\begin{equation}\label{eq:rank}
\rank \phi \leq r_1, \quad \,\rank \psi(\phi) \leq r_2.
\end{equation}
For almost all pairs $(r_1,r_2)$, these will define irreducible subvarieties, denoted by $\ol{O}_{r_1,r_2}$; however, when $\beta$ is symmetric and $(r_1, r_2) = (f/2, 0)$ (with $f$ even), they will have $2$ irreducible components (that are isomorphic to each other), in which case we denote by $\ol{O}_{r_1,r_2}$ either one of them. 
 
As the notation suggests, the varieties $\ol{O}_{r_1,r_2}$ are precisely the closures of the orbits $O_{r_1,r_2}$ under action of either the group $\GL(E)\times \SO(F)$ (when $\beta$ is symmetric) or $\GL(E)\times \Sp(F)$ (when $\beta$ is alternating). In fact, these are all the orbits under these actions (with the caveat mentioned above in the orthogonal case), and they can be described as those matrices for which we have equalities in (\ref{eq:rank}).

\medskip

The geometry of these orbit closures have been studied in \cite{lovett05, lovett07}, especially from the point of view of free resolutions. In this paper, we strengthen all the results found there regarding the singularities of orbit closures, and describe also generators of their defining ideals. Furthermore, our methods work in positive characteristic as well.

There are many interesting special cases appearing between these varieties. When $r_1=r_2$, the isotropic condition in (\ref{eq:rank}) is superfluous and we obtain all of the generic determinantal varieties, as we mentioned above. When $r_2=0$ and $r_1$ maximal satisfying the requirements (\ref{eq:condi}), then the isotropy conditions in (\ref{eq:condi}) yield precisely the (reduced) nullcones of our actions, which were studied more recently in \cite{kraft,vaib1, vaib2}.

The following are the main results of the paper. We begin with the case when $\beta$ is alternating, since then the results are uniform.

\begin{theorem} \label{mainsymp}
All $\GL(E)\times \Sp(F)$-orbit closures $\ol{O}_{r_1, r_2}$ are normal, and when $\cha \kk = 0$ have rational singularities (thus are Cohen--Macaulay). Furthermore, the minors and Pfaffians corresponding to the conditions (\ref{eq:rank}) give (good) generators of their defining ideals.
\end{theorem}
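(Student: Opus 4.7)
The plan is to build a Kempf-style collapsing desingularization of $\ol{O}_{r_1, r_2}$ and to read off both the singularities and the defining ideal from its direct image. Let $\mc{F}$ denote the partial isotropic flag variety parametrizing pairs $K \subset V \subset F$ with $\dim K = r_1 - r_2$, $\dim V = r_1$, and $V \subset K^\perp$; this forces $K$ to be isotropic and realizes $\mc{F}$ as a homogeneous space $\Sp(F)/P$. Let $\mc{R}$ be the tautological rank-$r_1$ subbundle corresponding to $V$, and consider the total space $Z := E \oo \mc{R}$ together with the $\GL(E) \times \Sp(F)$-equivariant collapsing
\[
 \pi: Z \longrightarrow E \oo F, \qquad (\phi, K \subset V) \longmapsto \bigl(E^* \xto{\phi} V \hookrightarrow F\bigr).
\]
Since $K$ is isotropic and $V \subset K^\perp$, the form $\beta|_V$ has $K$ in its radical, so $\rank \psi(\phi) \le r_1 - \dim K = r_2$ for every $(\phi, K \subset V) \in Z$; together with $\rank \phi \le r_1$ this gives $\pi(Z) \subseteq \ol{O}_{r_1, r_2}$. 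For $\phi$ in the open orbit $O_{r_1, r_2}$ the subspaces $V = \op{im}(\phi)$ and $K = \rad(\beta|_V)$ are uniquely determined of the correct dimensions, so $\pi$ is a proper birational morphism from the smooth variety $Z$ onto $\ol{O}_{r_1, r_2}$, i.e.\ a resolution of singularities.

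The next step is to show $\pi_* \mc{O}_Z = \mc{O}_{\ol{O}_{r_1, r_2}}$ and $R^i \pi_* \mc{O}_Z = 0$ for $i>0$, with $\pi_* \mc{O}_Z$ carrying a good $\GL(E) \times \Sp(F)$-filtration. Since $Z$ is a vector bundle, this reduces to proving $H^i(\mc{F}, \Sym^d(E^* \oo \mc{R}^*)) = 0$ for all $i > 0$, $d \ge 0$, and that each $H^0$ has a good filtration. Decomposing $\Sym^d(E^* \oo \mc{R}^*)$ via Cauchy's formula (or its good-filtration analogue in positive characteristic) into twisted Schur functors $S_\lambda(E^*) \oo S_\lambda(\mc{R}^*)$, one pulls back along $\Sp(F)/B \to \mc{F}$ and applies Kempf vanishing to the associated dominant line bundles on the full flag variety, with Mathieu's tensor-product theorem ensuring that the resulting sections carry a good filtration. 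Combined with the birationality above, this yields normality of $\ol{O}_{r_1, r_2}$ and a good filtration on its coordinate ring; in characteristic $0$ the same vanishing yields rational singularities, hence Cohen--Macaulayness, via Kempf's criterion.

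For the defining ideal, let $J$ denote the $\GL(E) \times \Sp(F)$-stable ideal generated by the $(r_1+1) \times (r_1+1)$ minors of the generic $\phi$ and the $(r_2+2) \times (r_2+2)$ Pfaffians of $\psi(\phi)$; these lie in $I := \mc{I}(\ol{O}_{r_1, r_2})$, giving an equivariant surjection $\Sym(E^* \oo F^*)/J \twoheadrightarrow \Sym(E^* \oo F^*)/I = \pi_* \mc{O}_Z$. One then compares good-filtration factors of both sides, using a Koszul-style resolution for the minors and Pfaffians on the left and the explicit cohomology computation above on the right, and checks that all $\GL \times \Sp$-isotypic multiplicities already agree, forcing $J = I$. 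Since the generators of $J$ span a good-filtration submodule, this also establishes the ``good generators'' conclusion.

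The main obstacle is the cohomology vanishing and good-filtration stability in arbitrary characteristic, where Bott's theorem is unavailable and one must combine Kempf vanishing on $\Sp(F)/B$ with Mathieu's tensor-product theorem plus careful bookkeeping of the Cauchy decomposition across the two sides of the surjection. Once those foundational facts are in place, the birationality, singularity, and ideal-matching steps reduce to formal character computations on the homogeneous space $\mc{F}$; the symplectic nature of $F$ simplifies matters here because $\Sp(F)$ is connected, so no component issues arise of the kind that will complicate the orthogonal case.
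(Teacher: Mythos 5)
The proposal takes a genuinely different route from the paper — a single Kempf-style collapsing over a flag variety for $\Sp(F)$, rather than the paper's two-step argument (Proposition \ref{prop:sympinv}: quotient map $\psi: E\otimes F \to \bigwedge^2 E$ together with the generic-perfection machinery of Proposition \ref{prop:main} for the full-rank case $r_1=e$; then Theorem \ref{thm:mainsymp}: a collapsing over the type-$A$ Grassmannian $\op{Gr}(r_1,E)$, pushing the established properties forward via the collapsing results of \cite{me}). Unfortunately the proposal has a fatal structural flaw.

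The variety $\mc{F}$ you define, parametrizing flags $K\subset V\subset K^\perp$ with $\dim K=r_1-r_2$ and $\dim V=r_1$, is \emph{not} a homogeneous space $\Sp(F)/P$. The containment $V\subset K^\perp$ does not force $V$ to be isotropic, and the $\Sp(F)$-orbit of the pair $(K,V)$ depends on the additional invariant $\dim(V\cap V^\perp)$, which varies over $\mc{F}$ (the open orbit is $V\cap V^\perp=K$, and there are smaller orbits where the radical of $\beta|_V$ strictly contains $K$). Concretely, $\mc{F}$ is the total space of the Grassmannian bundle $\op{Gr}(r_2,\mc{K}^\perp/\mc{K})$ over $\op{IGr}(r_1-r_2,F)$, which is smooth and gives a genuine resolution of singularities $\pi:Z\to\ol{O}_{r_1,r_2}$ — but it is homogeneous only in the degenerate cases $r_2=0$ or $r_1=r_2$. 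Consequently the next step, ``pull back along $\Sp(F)/B\to\mc{F}$ and apply Kempf vanishing,'' has no meaning, and the entire cohomology-vanishing / good-filtration argument collapses.

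Even setting aside the homogeneity error and attempting to compute cohomology through the tower $\mc{F}\to\op{IGr}(r_1-r_2,F)\to\op{pt}$, one runs into the same obstruction: the relevant factors take the form $S_\nu(\mc{K}^*)\otimes S_\mu\bigl((\mc{K}^\perp/\mc{K})^*\bigr)$ on $\op{IGr}(r_1-r_2,F)$, and after splitting $S_\mu$ into symplectic Weyl modules the associated weights $(\nu_1,\dots,\nu_{r_1-r_2},\sigma_1,\dots)$ on $\Sp(F)/B$ are generally \emph{not} dominant (the inequality $\nu_{r_1-r_2}\geq\sigma_1$ fails for many factors). Kempf vanishing therefore does not apply factor-by-factor, and in characteristic zero Bott's algorithm already produces nonzero higher cohomology in individual Cauchy factors, so one would need a cancellation argument that the proposal does not provide. (The aggregate vanishing $R^i\pi_*\mc{O}_Z=0$ is true in characteristic $0$ a posteriori, since the paper shows $\ol{O}_{r_1,r_2}$ has rational singularities; but it cannot be derived by the route you sketched, and in positive characteristic the claim is far less obvious.) The ``compare good-filtration factors on both sides of the surjection and conclude $J=I$'' step is likewise left as a hope rather than an argument — matching $\GL(E)\times\Sp(F)$-isotypic multiplicities through a Koszul-type complex for the minors and Pfaffians is a substantial computation. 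The paper avoids all of these issues by first transferring the well-understood properties of the Pfaffian variety $\ol{O}_r\subset\bigwedge^2 E$ through the flat quotient map $\psi$ (via generic perfection, which also handles defining equations cleanly), and only then collapsing over a $\GL(E)$-Grassmannian, where the relative fiber is an entire linear subspace $\kk^{r_1}\otimes F$ and no symplectic Bott computation is needed.
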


When $\beta$ is symmetric, there are some exceptional cases.

\begin{theorem} \label{mainorth}
The $\GL(E)\times \SO(F)$-orbit closures in $\ol{O}_{r_1, r_2}$ are normal if and only if either $r_2\neq 2 r_1 - f$ or $r_2=0$ or $r_2=e_1$, in which case when $\cha \kk = 0$ they also have rational singularities (thus are Cohen--Macaulay). When  $\ol{O}_{r_1, r_2}$ is not normal, it is Cohen--Macaulay if and only if $r_1=e$. The minors corresponding to the conditions (\ref{eq:rank}) give (good) generators of the defining ideals of $\ol{O}_{r_1, r_2}$, unless $(r_1,r_2) = (f/2, 0)$.
\end{theorem}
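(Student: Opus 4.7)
The plan is to extend the Kempf-collapsing framework used in Theorem \ref{mainsymp} to the orthogonal case, carefully tracking where the differences between type-$C$ and types $B$/$D$ produce the exceptional cases. For each admissible pair $(r_1,r_2)$ I would construct a $\GL(E)\times\SO(F)$-equivariant morphism $\pi\colon Z_{r_1,r_2}\to\ol{O}_{r_1,r_2}$, where $Z_{r_1,r_2}$ is the total space of a homogeneous vector bundle over the two-step isotropic flag variety parametrizing pairs $(N,W)$ with $N$ isotropic of dimension $r_1-r_2$, $\dim W=r_1$, and $N\subset W\subset N^{\perp_\beta}$; the fibre over $(N,W)$ is $\Hom(E^*,W)$. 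For a generic $\phi\in O_{r_1,r_2}$ the subspaces $W=\op{im}\phi$ and $N=W\cap W^{\perp_\beta}$ are uniquely determined, so $\pi$ is birational onto $\ol{O}_{r_1,r_2}$ in the non-exceptional cases; the exception $(r_1,r_2)=(f/2,0)$ arises because the Lagrangian Grassmannian is disconnected and its two components account for the two irreducible components of $\ol{O}_{f/2,0}$.

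Since $Z_{r_1,r_2}$ is a vector bundle over a flag variety for the reductive group $\GL(E)\times\SO(F)$, Kempf vanishing and Frobenius splitting of the base equip $\kk[Z_{r_1,r_2}]$ with a good $\GL(E)\times\SO(F)$-filtration. When $\pi$ is birational, $\pi_*\mc{O}_{Z_{r_1,r_2}}$ equals the integral closure of $\mc{O}_{\ol{O}_{r_1,r_2}}$ inside its function field, and this pushforward inherits the good filtration; so normality of $\ol{O}_{r_1,r_2}$ will be equivalent to the equality $\pi_*\mc{O}_{Z_{r_1,r_2}}=\mc{O}_{\ol{O}_{r_1,r_2}}$. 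In characteristic zero, relative Bott vanishing on the flag fibres gives $R^i\pi_*\mc{O}_{Z_{r_1,r_2}}=0$ for $i>0$, so whenever $\ol{O}_{r_1,r_2}$ is normal the Kempf--Weyman formalism produces rational (in particular Cohen--Macaulay) singularities.

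The technical heart is to decide exactly when $\pi_*\mc{O}_{Z_{r_1,r_2}}=\mc{O}_{\ol{O}_{r_1,r_2}}$. In the range $r_2\neq 2r_1-f$ the two-step flag base is smooth of the expected dimension and the Kempf--Weyman machinery delivers normality directly. The degenerate sub-cases $r_2=0$ and $r_2=r_1$ are classical: the former is the nullcone normality handled in \cite{vaib1,vaib2}, and the latter is the ordinary rank-$r_1$ determinantal variety, normal by \cite{weymanbook}. In the truly exceptional range $0<r_2=2r_1-f<r_1$, where $N=W^{\perp_\beta}\subset W$ forces the two-step flag to degenerate to a single isotropic Grassmannian, I would analyse the local ring at a generic point of the ``drop'' locus and exhibit a regular function on $Z_{r_1,r_2}$ that fails to descend to $\ol{O}_{r_1,r_2}$, proving non-normality. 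Cohen--Macaulayness in the sub-case $r_1=e$ would be proved by observing that the rank-$r_1$ condition on $\phi$ is then vacuous and $\ol{O}_{r_1,r_2}$ equals $\psi^{-1}(\ol{O}^{\mathrm{sym}}_{r_2})$ for the symmetric rank-$r_2$ determinantal variety $\ol{O}^{\mathrm{sym}}_{r_2}\subset\Sym_2(E)$; a generic-perfection argument along $\psi$ then lifts Cohen--Macaulayness from the target.

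For the defining equations, one compares $\GL(E)\times\SO(F)$-characters: the good-filtration pushforward expresses $\kk[\ol{O}_{r_1,r_2}]$ as a quotient of $\kk[E\oo F]$ whose Cauchy-type decomposition over $\Sym^\bullet(E\oo W)$ on the base matches, summand by summand, the quotient by the $(r_1+1)$-minors of $\phi$ together with the $(r_2+1)$-minors of $\psi(\phi)$. This matching of ideals holds in every case except $(r_1,r_2)=(f/2,0)$, where the two components of $\ol{O}_{f/2,0}$ are separated inside their common locus by a Pfaffian invariant under $\SO(F)$ but not under $\op{O}(F)$, and therefore not in the ideal of rank minors. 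The hardest part of the plan is the third step: distinguishing non-normality from Cohen--Macaulayness inside the exceptional range $0<r_2=2r_1-f<r_1$, and identifying $r_1=e$ as the sharp boundary. This will require tracking depths of the Koszul-type complex for $\pi$ fibrewise over the base against the $G$-equivariant coordinate structure on $\ol{O}_{r_1,r_2}$.
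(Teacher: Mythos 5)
Your route is genuinely different from the paper's, and closer in spirit to Lovett's original approach, which the author explicitly flags as having gaps (Remark \ref{rem:errors}). You desingularize $\ol{O}_{r_1,r_2}$ by a single Kempf collapsing over a two-step isotropic flag variety of $F$. The paper instead uses a two-stage relative argument: first prove everything in the full-rank case $r_1=e$ by pulling back the (known) structure of the symmetric determinantal variety $\ol{O}_{r_2}\subset\op{S}_2(E)$ along the $\SO(F)$-quotient map $\psi$ via the Generic Perfection Theorem (this is Proposition \ref{prop:orthinv}, where the non-normal case $r_2=2e-f$ is detected directly), and then reduce the general $r_1<e$ case to $r_1=e$ by a collapsing over the ordinary Grassmannian $\op{Gr}(r_1,E)$, a parabolic only in the $\GL(E)$ factor (proof of Theorem \ref{thm:mainsymp}), using the transfer results of \cite{me}. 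This division of labor replaces your claim that ``the Kempf--Weyman machinery delivers normality directly'' in the generic range -- which is precisely the step the paper says Lovett's argument left incomplete, and which becomes genuinely hard in positive characteristic where Bott vanishing can fail.

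Two concrete gaps. First, your appeal to ``Kempf vanishing and Frobenius splitting of the base'' to get good filtrations and normality does not by itself give $\pi_*\mc{O}_{Z_{r_1,r_2}}=\mc{O}_{\ol{O}_{r_1,r_2}}$ in arbitrary characteristic; the paper's substitute for this is the good-filtration/generic-perfection machinery of Sections 2--3 applied along $\psi$, together with the characteristic-free transfer theorems from \cite{me} for the Grassmannian collapsing. Second, and more importantly, you never argue the negative direction of the Cohen--Macaulay statement: that for $0<r_2=2r_1-f<r_1$ with $r_1<e$ the orbit closure fails to be Cohen--Macaulay. You only sketch the positive direction $r_1=e$ (via $\psi^{-1}$ of the symmetric determinantal variety, which is indeed the paper's route) and explicitly defer the rest as ``the hardest part.'' The paper's argument here is short and you should not skip it: by the codimension formula (\ref{eq:codim2}), once $r_1<e$ the unique codimension-one boundary orbit of the full-rank case disappears, so $\ol{O}_{r_1,r_2}$ becomes regular in codimension one; combined with non-normality, Serre's criterion forces failure of $S_2$ and hence of Cohen--Macaulayness. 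Without this observation your plan does not actually establish the ``only if'' in the CM claim.
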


We give also (good) defining generators in the latter case $(r_1,r_2) = (f/2, 0)$ (with $f$ is even) in Proposition \ref{prop:eqred}. Here, some additional equations are necessary since the rank conditions yield two irreducible components, as discussed above.

\medskip

Thus, in both the symplectic and orthogonal cases we determine (good) defining equations for all orbit closures and classify the normal orbit closures in arbitrary characteristic (not two for the orthogonal case). Moreover, we also get the complete classification of the Cohen--Macaulay orbit closures when $\cha \kk = 0$. We further prove that some special orbit closures are strongly $F$-regular (hence, Cohen--Macaulay) in positive characteristic.

\smallskip

Our motivation comes also from the point of view of $\mathcal{D}$-modules. The paper \cite{lorper} studies the equivariant $\mathcal{D}$-modules on $E\oo F$, together with related invariants (such as local cohomology), in the orthogonal case when $f=4$. This is part of a larger program of classification of equivariant $\mathcal{D}$-modules (and related invariants) for all irreducible representations with finitely many orbits. To proceed with this program in the larger cases, understanding the singularities of orbit closures becomes necessary (cf. \cite[Section 6]{lorper}).

\medskip

The two main methods that we employ are both relative, i.e. they allow carrying over singularities from smaller varieties to bigger ones. They can be utilized in various other settings beyond the varieties $\ol{O}_{r_1,r_2}$ considered in this paper.

\smallskip

\textbf{Proof outline:} 
We can interpret the map (\ref{eq:psi}) above as an affine quotient map, via the fundamental theorems of invariant theory for the orthogonal or symplectic groups. We use this map to lift various properties from the $\GL(E)$-orbit closures in the target spaces $\op{S}_2(E)$ or $\bigwedge^2 E$ to their preimages, and thus obtain the results for orbit closures of the form $\ol{O}_{e,r_2}$, (i.e. with full rank $r_1=e$). 

Once we have the results for the full rank case, we transfer its properties using the collapsing method \cite{kempf76, me} to general orbit closures $\ol{O}_{r_1, r_2}$.
\section{Preliminaries}

Throughout we work over an algebraically closed field $\kk$.  For an algebraic group $G$, all $G$-modules considered are assumed to be rational of countable dimension. Unless otherwise stated, throughout schemes are Noetherian, $\kk$-algebras finitely generated and commutative.


\medskip

Let $G$ be a connected reductive group over $\kk$, $B$ a Borel subgroup and $U$ its unipotent radical. We fix a maximal torus $T \subset B$, and denote by $X(T)$ its group of characters.

For $\la \in X(T)$, we put $\mc{L}(\la)$ to be the sheaf of sections of the line bundle $G\times_B \kk_{-\la}$, where $\kk_{-\la}$ is associated the $1$-dimensional representation of $B$.

A weight $\la \in X(T)$ is dominant if $\langle \la , \al^\vee \rangle \geq 0$, for all simple roots $\al \in S$. The set of dominant weights is denoted by $X(T)_+$. For $\la \in X(T)_+$, we call the space of sections
\[\nabla_G(\la):=H^0(G/B, \mc{L}(\la)),\]
a \emph{dual Weyl module}. It has lowest weight $-\la$ and highest weight $-w_0 \cdot \la$. The module $\Delta_G(\la) = \nabla_G(\la)^*$ is called a \emph{Weyl module}, that has a non-zero highest weight vector of weight $\la$, and this generates $\Delta_G(\la)$ as a $G$-module.

Take a (possibly infinite-dimensional) $G$-module $V$. Following Donkin \cite{donkin}, an ascending exhaustive filtration 
\[0 = V_0 \subset V_1 \subset V_2 \subset \dots\]
of $G$-submodules of $V$ is a \textit{good filtration} of $V$, if each $V_i/V_{i-1}$ is isomorphic to a dual Weyl module. 

A finite-dimensional $G$-module $W$ is \textit{good} if $\Sym_d W^*$ has a good filtration for all $d\geq 0$.  Similarly, we call an affine $G$-scheme $X$ good (or $G$-good) if $\kk[X]$ has a good filtration. 

\smallskip

If $\cha \kk = 0$, then all $G$-modules have good filtrations. We start with the following basic result.

\begin{lemma}\label{lem:seq}
Consider an exact sequence of $G$-modules
\[ 0 \to A \to B \to C \to 0.\]
\begin{enumerate}
\item Assume that $A$ has a good filtration. Then if $B$ (resp. $C$) has a good filtration, then so does $C$ (resp. $B$).
\item Assume the sequence splits. Then $B$ has a good filtration if and only if both $A$ and $C$ have good filtrations.
\end{enumerate}
\end{lemma}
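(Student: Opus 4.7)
The plan is to invoke the standard cohomological criterion for good filtrations (see Donkin \cite{donkin} or Jantzen, \emph{Representations of Algebraic Groups}, II.4): a $G$-module $V$ has a good filtration if and only if $\Ext^1_G(\Delta_G(\mu), V) = 0$ for every $\mu \in X(T)_+$, and in that case $\Ext^i_G(\Delta_G(\mu), V) = 0$ for all $i \geq 1$ and all dominant $\mu$. This reduces everything about good filtrations in short exact sequences to a routine long exact sequence computation.

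For part (1), I would apply the functor $\Hom_G(\Delta_G(\mu),-)$ to the given exact sequence to obtain, for each $\mu \in X(T)_+$, the long exact sequence
\[
\cdots \to \Ext^i_G(\Delta_G(\mu), A) \to \Ext^i_G(\Delta_G(\mu), B) \to \Ext^i_G(\Delta_G(\mu), C) \to \Ext^{i+1}_G(\Delta_G(\mu), A) \to \cdots.
\]
Since $A$ has a good filtration, the outer terms $\Ext^i_G(\Delta_G(\mu), A)$ vanish for $i \geq 1$, and the sequence collapses to an isomorphism $\Ext^1_G(\Delta_G(\mu), B) \cong \Ext^1_G(\Delta_G(\mu), C)$. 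Hence the vanishing criterion holds for $B$ if and only if it holds for $C$, proving both directions of the statement.

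For part (2), a splitting $B \cong A \oplus C$ makes $\Ext^1_G(\Delta_G(\mu), -)$ additive on the summands, so $\Ext^1_G(\Delta_G(\mu), B)$ vanishes for all dominant $\mu$ if and only if both $\Ext^1_G(\Delta_G(\mu), A)$ and $\Ext^1_G(\Delta_G(\mu), C)$ do; applying the criterion again yields the equivalence. (Alternatively, the ``if'' direction can be seen concretely by interleaving good filtrations of $A$ and $C$ to build one on $B$.)

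The main point to be careful about is that $V$ is allowed to be infinite-dimensional (only of countable dimension, as declared in the preliminaries), so one needs the version of the criterion that applies in this generality; fortunately the reference \cite{donkin} establishes exactly this. Beyond that, the proof is purely formal, and I do not anticipate any significant obstacle.
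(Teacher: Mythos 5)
Your proof is correct: the paper states this lemma without proof as a ``basic result,'' and the cohomological criterion you invoke ($V$ has a good filtration iff $\Ext^1_G(\Delta_G(\mu),V)=0$ for all dominant $\mu$, with all higher $\Ext$ then vanishing) together with the long exact sequence is exactly the standard argument it implicitly relies on. You also correctly flag the only delicate point, namely that the criterion must be applied to possibly infinite-dimensional modules, which is covered by the paper's standing assumption that all $G$-modules are rational of countable dimension.
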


Next, we recall an important result of Donkin \cite{donkin} and Mathieu \cite[Theorem 1]{mathieu}.

\begin{theorem}\label{thm:mathieu}
If $M$ and $N$ are $G$-modules with good filtrations, then $M\otimes_\kk N$ has a good filtration.
\end{theorem}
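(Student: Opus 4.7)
The plan is a standard reduction to the case of tensor products of two dual Weyl modules, followed by invoking deep input of Frobenius splittings (or a type-by-type classification) in positive characteristic; in characteristic zero the result is essentially trivial.

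First, by Lemma \ref{lem:seq}(1) applied iteratively (and a direct limit argument to handle infinite filtrations), it suffices to prove that $\nabla_G(\la) \otimes \nabla_G(\mu)$ has a good filtration for every pair of dominant weights $\la, \mu$. Indeed, tensoring the good filtration $0 = M_0 \subset M_1 \subset \cdots$ of $M$ with the flat $\kk$-module $N$ yields a filtration of $M \otimes N$ whose successive quotients are $\nabla_G(\la_i) \otimes N$; if each such quotient has a good filtration, then an induction along the filtration (using Lemma \ref{lem:seq}(1) at each step with the previous term playing the role of $A$) produces a good filtration on $M \otimes N$. A symmetric reduction on $N$ then reduces the problem to $M = \nabla_G(\la)$ and $N = \nabla_G(\mu)$.

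Second, I would rephrase ``has a good filtration'' cohomologically: a rational $G$-module $V$ has a good filtration if and only if $\Ext^1_G(\Delta_G(\nu), V) = 0$ for every dominant $\nu$. Combined with the Borel--Weil realization $\nabla_G(\la) = H^0(G/B, \mc{L}(\la))$ and Kempf vanishing, this becomes the vanishing $H^i(G/B, V \otimes \mc{L}(-w_0 \nu)) = 0$ for all $i \geq 1$ and all dominant $\nu$. Applied to $V = \nabla_G(\la) \otimes \nabla_G(\mu)$, the K\"unneth formula rephrases the task as a vanishing of higher cohomology for a certain line bundle on $G/B \times G/B$.

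The main obstacle, and by far the technical heart of the theorem, is establishing this cohomology vanishing in positive characteristic, where complete reducibility fails. In characteristic zero one simply decomposes the tensor product into a direct sum of dual Weyl modules and is done. Mathieu's proof in positive characteristic hinges on constructing a Frobenius splitting of $G/B \times G/B$ (or of a suitable Bott--Samelson resolution) that is compatible with the diagonal, which lets one propagate the characteristic-zero vanishing through Frobenius twists and obtain the result uniformly for all reductive $G$. Donkin's earlier work handled classical types via a case-by-case analysis using characters and tilting modules. Neither argument is short, and I would not attempt to reproduce the geometric or combinatorial input; the reductions above merely isolate where that deep input is needed.
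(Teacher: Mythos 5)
The paper does not prove this statement; it recalls it as the Donkin--Mathieu theorem and cites \cite{donkin} and \cite[Theorem 1]{mathieu} directly, then uses it as a black box throughout. There is therefore no proof in the paper to compare against, and your sketch has to be judged against the cited literature.

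Your reductions are correct and standard: tensoring a good filtration of $M$ with $N$ and iterating Lemma \ref{lem:seq}(1) (with a direct-limit step for infinite filtrations, which is fine in the paper's setting of countable-dimensional rational modules) does reduce the claim to $\nabla_G(\la) \otimes \nabla_G(\mu)$, and the cohomological criterion via vanishing of $\Ext^1_G(\Delta_G(\nu), -)$ is indeed the right reformulation. One small imprecision worth flagging: after Borel--Weil you have three dual Weyl modules in play, $\nabla_G(-w_0\nu)$, $\nabla_G(\la)$, and $\nabla_G(\mu)$, so the geometric incarnation of the vanishing involves a triple, not a double, product of flag varieties (or equivalently a diagonal inside one). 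The genuine content of the theorem --- Mathieu's Frobenius-splitting argument on Bott--Samelson/Schubert varieties compatible with the diagonal, or Donkin's earlier type-by-type analysis --- you correctly identify and then explicitly decline to reproduce. Since the paper itself treats the result as a cited black box, that is a defensible authorial choice, but be clear-eyed that what you have written is a map of where the proof lives rather than a proof.
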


For the following, see \cite[Section 4]{andjan}.

\begin{lemma}\label{lem:good}
Let $V,W$ be finite-dimensional $G$-modules. If $\bigwedge V$ and $ \bigwedge W$ have good filtrations, then $V\otimes W$ is good.
\end{lemma}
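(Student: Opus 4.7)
By the definition of ``good'' given in the paper, I must show $\Sym_d(V^* \otimes W^*)$ has a good filtration for every $d \ge 0$. A preliminary reduction shows the hypothesis is self-dual in $V$ and $W$: the perfect exterior pairing gives a $G$-equivariant isomorphism $\bigwedge^k V^* \cong \bigwedge^{\dim V - k} V \otimes \det V^*$, and since $\det V^*$ is one-dimensional it is simultaneously a Weyl and a dual Weyl $G$-module, and in particular has a good filtration. Thus Theorem~\ref{thm:mathieu} implies $\bigwedge V^*$ has a good filtration whenever $\bigwedge V$ does, and symmetrically for $W$. After replacing $V, W$ by $V^*, W^*$, it therefore suffices to prove the following equivalent statement: if $\bigwedge V$ and $\bigwedge W$ have good filtrations, then $\Sym_d(V \otimes W)$ has a good filtration for every $d \ge 0$.

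For the main step I would invoke the Cauchy filtration of Akin--Buchsbaum--Weyman on $\Sym_d(V \otimes W)$: there is a natural $\GL(V) \times \GL(W)$-equivariant (hence $G$-equivariant) ascending filtration whose associated graded decomposes, over partitions $\lambda \vdash d$, as $\bigoplus_\lambda L_\lambda V \otimes K_\lambda W$, where $L_\lambda$ and $K_\lambda$ denote the Schur and Weyl functors. Each factor $L_\lambda V$ and $K_\lambda W$ is presented, in the ABW style, by a natural short exact sequence whose outer terms are tensor products of exterior powers of $V$ (respectively of $W$). By iterated application of Theorem~\ref{thm:mathieu}, combined with the standing hypothesis that $\bigwedge V$ and $\bigwedge W$ have good filtrations, all such tensor products of exterior powers have good filtrations.

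The remaining task is to propagate the good filtration property from these tensor products of exterior powers, through the ABW presentations of $L_\lambda V$ and $K_\lambda W$, and then through the Cauchy filtration itself, by repeated use of Lemma~\ref{lem:seq}(1). The main technical obstacle is that submodules and quotients of good-filtered modules are not in general good-filtered, so this propagation has to be organized carefully: one orders the exact sequences and the Cauchy filtration by induction on $d$ and on the partition ordering, arranging that at each step two out of the three terms of the relevant short exact sequence are already known to be good-filtered (one by the outer ABW term being a tensor product of exterior powers, the other by induction), and then invokes Lemma~\ref{lem:seq}(1) to conclude on the third. Once $\Sym_d(V \otimes W)$ has a good filtration for every $d \ge 0$, the reduction in the first paragraph yields the lemma.
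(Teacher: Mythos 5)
Your route via the Cauchy filtration on $\Sym_d(V\otimes W)$ is the standard one and is in the spirit of the cited reference (the paper itself does not give a proof, only a pointer to Andersen--Jantzen, Section~4). The duality reduction in your first paragraph is correct: $\det V^*$ is one-dimensional, hence its $T$-weight is Weyl-group fixed and therefore dominant, so it is simultaneously a Weyl and dual Weyl module; together with Theorem~\ref{thm:mathieu} this lets you transfer the hypothesis from $\bigwedge V$ to $\bigwedge V^*$.

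The difficulties begin in the second and third paragraphs. First, the ABW presentation of the Schur module is a right-exact sequence $\square_\lambda V \to \bigwedge^{\lambda'_1}V\otimes\cdots\otimes\bigwedge^{\lambda'_s}V \to L_\lambda V \to 0$; the kernel of the surjection is the image of the ``box'' map, which is \emph{not} a tensor product of exterior powers and has no a priori good filtration. So the claim that both ``outer terms are tensor products of exterior powers'' is not accurate, and this is precisely the term you would need to control. Second, and more seriously, the ``two out of three'' propagation you invoke is not supported by Lemma~\ref{lem:seq}(1). That lemma always requires the \emph{submodule} $A$ to have a good filtration; from a short exact sequence $0\to A\to B\to C\to 0$ with $B$ and $C$ good you cannot conclude that $A$ is good. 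Your plan to ``conclude on the third'' term regardless of its position in the sequence is therefore not justified. (A minor point in the same vein: the Cauchy filtration of $\Sym_d(V\otimes W)$ has subquotients $L_\lambda V\otimes L_\lambda W$, both Schur functors; writing $K_\lambda W$ is either a convention mismatch or would require a separate argument, since dualizing $L_\lambda(W^*)$ produces a \emph{Weyl} filtration, not a good one.)

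The fix is to replace the ABW presentation by the standard-basis (straightening) filtration of $\bigwedge^{\lambda'_1}V\otimes\cdots\otimes\bigwedge^{\lambda'_s}V$: its subquotients are $L_\mu V$ for partitions $\mu$ with $\mu\trianglerighteq\lambda$, and $L_\lambda V$ sits as the top quotient. Induct downward in dominance order: for the maximal $\lambda$ the filtration is trivial and $L_\lambda V=\bigwedge^{\lambda'}V$ is good; for general $\lambda$ the kernel $K$ of $\bigwedge^{\lambda'}V\onto L_\lambda V$ is built from $L_\mu V$ with $\mu\rhd\lambda$, which are good by induction, so $K$ is good by the extension case of Lemma~\ref{lem:seq}(1), and then the permitted direction ($A$, $B$ good $\Rightarrow$ $C$ good) yields $L_\lambda V$ good. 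With both $L_\lambda V$ and $L_\lambda W$ good, Theorem~\ref{thm:mathieu} makes each Cauchy subquotient good, and building up the filtration bottom-to-top (again only using the $A$, $C$ good $\Rightarrow$ $B$ good direction) finishes. Once these two substitutions are made your argument is correct; as written it has a genuine gap.
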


\smallskip

By a $G$-algebra $R$ we mean a $\kk$-algebra that is rational as a $G$-module such that the maps $\kk \to R$ (viewing $\kk$ as a trivial $G$-module) and $R \oo R \to R$ (given by multiplication) are $G$-equivariant. 

\smallskip

Following \cite[Definition 2.13]{me}, we have the following notion on generators of modules.

\begin{defn}\label{def:goodgen}
Let $R$ be a $G$-algebra with a good filtration and $M$ a finitely generated $(G,R)$-module. We say that a finite set of equations $\mc{P} \subset M$ are \textit{good generators} of $M$ if the following hold for $V_{\mc{P}} := \op{span}_{\kk} \mc{P}  \, \subset M$:
\begin{enumerate}
\item $V_{\mc{P}}$ is a $G$-module with a good filtration;
\item The kernel of the onto multiplication map $m_{\mc{P}}: R \oo V_{\mc{P}} \to M$ has a good filtration.
\end{enumerate}
By abuse of terminology, in such case we will also say that the $G$-submodule  $V_{\mc{P}} \subset M$ gives good generators for $M$.

When $Y$ is a good affine $G$-scheme and $X\subset Y$ a closed $G$-stable subscheme with ideal $I_X \subset \kk[Y]$, we also call a set of good generators $\mc{P} \subset I_X$ good defining equations of $X$ in $Y$.
\end{defn}

\medskip

Note that (2) can be replaced by the equivalent condition of surjectivity of the multiplication map on the level of $U$-invariants (see \cite[Section 2.4]{me}). Further, there exist good defining equations for $X\subset Y$ if and only if $(Y,X)$ is a good pair \cite[Lemma 2.14]{me}. We will see that any module with a good filtration has good generators (see Proposition \ref{prop:goodgen} below).

We mention a basic result that was implicitly used in the example from \cite[Section 4.1]{me}. We use the same notation as in the definition above.

\begin{lemma}\label{lem:throwgens}
Let $V\subset M$ give good generators with $V\cong V_1 \oplus V_2$ as $G$-modules. If $V_1$ generates $M$, then it also gives good generators for $M$.
\end{lemma}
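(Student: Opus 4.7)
The plan is to verify the two conditions of Definition~\ref{def:goodgen} for the subspace $V_1 \subset M$. Condition (1), that $V_1$ itself has a good filtration, is immediate from Lemma~\ref{lem:seq}(2) applied to the split $G$-equivariant decomposition $0 \to V_1 \to V \to V_2 \to 0$, since $V$ has a good filtration by hypothesis.

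For condition (2), I need the kernel $K_1 := \ker m_{V_1}$ to have a good filtration. The $G$-equivariant inclusion $R \otimes V_1 \hookrightarrow R \otimes V$ induced by $V_1 \hookrightarrow V$ yields the commutative diagram of exact rows
\[
\begin{array}{ccccccccc}
0 & \to & K_1 & \to & R \otimes V_1 & \xrightarrow{m_{V_1}} & M & \to & 0 \\
  &     & \downarrow & & \downarrow & & \parallel & & \\
0 & \to & K & \to & R \otimes V & \xrightarrow{m_V} & M & \to & 0,
\end{array}
\]
and the snake lemma produces the short exact sequence
\[
0 \to K_1 \to K \to R \otimes V_2 \to 0.
\]
Surjectivity of $K \twoheadrightarrow R \otimes V_2$ uses precisely that $V_1$ generates $M$: for any $x_2 \in R \otimes V_2$ we can choose $x_1 \in R \otimes V_1$ with $m_{V_1}(x_1) = -m_{V_2}(x_2)$, whence $x_1 + x_2 \in K$ projects to $x_2$. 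The module $R \otimes V_2$ has a good filtration by Theorem~\ref{thm:mathieu} (since $R$ and $V_2$ do), and $K$ has one by hypothesis.

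To finish, I would show that the above short exact sequence splits as a sequence of $G$-modules, after which Lemma~\ref{lem:seq}(2) delivers a good filtration on $K_1$. A natural candidate section $\sigma: R \otimes V_2 \to K$ is $\sigma(x_2) = x_2 - \tau(x_2)$, where $\tau: R \otimes V_2 \to R \otimes V_1$ is a $G$-equivariant $R$-linear lift of $m_{V_2}$ along $m_{V_1}$; by $R$-linearity, constructing $\tau$ reduces to producing a $G$-equivariant lift of the inclusion $V_2 \hookrightarrow M$ to $R \otimes V_1$. This lift is the main obstacle of the proof, and I expect to handle it via the equivalent $U$-invariants formulation of condition (2) noted after Definition~\ref{def:goodgen}: the hypothesis provides $(R \otimes V)^U \twoheadrightarrow M^U$, and since $(R \otimes V)^U = (R \otimes V_1)^U \oplus (R \otimes V_2)^U$, one argues that any $U$-invariant preimage in $R \otimes V_2$ can be replaced by one in $R \otimes V_1$, which yields the desired lift and hence the splitting.
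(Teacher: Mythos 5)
Your first two steps are correct and match the setting of the paper: $V_1$ (and $V_2$) has a good filtration by Lemma~\ref{lem:seq}(2), and the snake lemma applied to the two rows $0 \to K_1 \to R\otimes V_1 \to M \to 0$ and $0 \to K \to R\otimes V \to M \to 0$ (using that $V_1$ generates $M$, so that the first row is exact) gives the short exact sequence $0 \to K_1 \to K \to R\otimes V_2 \to 0$. The issue is what you do next. In $0 \to A \to B \to C \to 0$, Lemma~\ref{lem:seq}(1) lets you pass a good filtration from $\{A,B\}$ to $C$ or from $\{A,C\}$ to $B$, but \emph{not} from $\{B,C\}$ to $A$; so knowing $K$ and $R\otimes V_2$ are good does not give $K_1$ good from your sequence. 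You recognize this and try to patch it by producing a $G$-equivariant splitting. This is where the genuine gap is: the section $\sigma\colon R\otimes V_2 \to K$ you need is equivalent (by $\operatorname{Hom}_{(G,R)}(R\otimes V_2,-)\cong\operatorname{Hom}_G(V_2,-)$) to a $G$-module lift of $V_2 \hookrightarrow M$ along $m_1\colon R\otimes V_1 \twoheadrightarrow M$, whose obstruction lives in $\operatorname{Ext}^1_G(V_2,K_1)$, i.e.\ in exactly the thing you are trying to control. Your closing sentence, that ``any $U$-invariant preimage in $R\otimes V_2$ can be replaced by one in $R\otimes V_1$,'' is precisely the assertion that $(R\otimes V_1)^U \twoheadrightarrow M^U$, which by the remark following Definition~\ref{def:goodgen} \emph{is} condition (2) for $V_1$ to give good generators. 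So the final step asserts the conclusion rather than proving it; the proposal is not complete.

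For comparison: the paper's diagram has the first column oriented oppositely, as $0 \to R\otimes V_2 \to K \to K_1 \to 0$, which is exactly what lets Lemma~\ref{lem:seq}(1) be invoked directly (with $A=R\otimes V_2$ and $B=K$ good, hence $C=K_1$ good). But note that this column does not commute with the naive inclusion $R\otimes V_2\hookrightarrow R\otimes V$ and the projection $R\otimes V\twoheadrightarrow R\otimes V_1$ (since $R\otimes V_2\not\subset K$ in general); making it a commutative diagram of $G$-modules requires the same $G$-equivariant lift $\sigma\colon R\otimes V_2\to R\otimes V_1$ with $m_1\sigma=m_2$ that your splitting needs. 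So your work has in effect isolated the nontrivial step rather than filled it, and the proposal stops exactly where a real argument is required.
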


\begin{proof}
By Lemma \ref{lem:seq} (2), both $V_1$ and $V_2$ have good filtrations. Let $K$ (resp. $K_1$) be the kernel of the multiplication map $m$ (resp. $m_1$). Then we have the following commutative diagram with exact rows and columns:
\[\xymatrix@R1.3pc{
& 0 \ar[d] &  0 \ar[d] & & \\
0 \ar[r] & R\oo V_2 \ar[r] \ar[d] & R\oo V_2 \ar[r] \ar[d] & 0  \ar[d] &\\
0 \ar[r] & K \ar[r] \ar[d] & R\oo V \ar[d] \ar[r]^{\quad m} & M \ar[r] \ar[d]& 0\\
0 \ar[r] & K_1 \ar[r] \ar[d] & R\oo V_1 \ar[d] \ar[r]^{\quad m_1} & M \ar[r] \ar[d] & 0 \\
 & 0 & 0 & 0 &
}\]
By definition, $K$ has a good filtration, and so does $R\oo V_2$ by Theorem \ref{thm:mathieu}. Hence, by Lemma \ref{lem:seq} (1) we obtain that $K_1$ has a good filtration.
\end{proof}

Lastly, we mention the following result that we use to find good defining equations in the exceptional cases.

\begin{lemma}\label{lem:reducible}
Let $I_1, I_2 \subset R$ be two $(G,R)$-ideals. Assume that there exists $G$-submodules $V_{i}\subset I_i$ such that $V_{1} \cap V_{2}$ gives good generators for $I_1 \cap I_2$, and of $V_{1} + V_{2}$ good generators for $I_1+I_2$. Then $V_{i}$ gives good generators for $I_i$.
\end{lemma}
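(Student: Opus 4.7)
The plan is to run a Mayer--Vietoris argument on $V_1, V_2$ and on $I_1, I_2$ simultaneously. Consider the short exact sequence of $G$-modules
\[
0 \to V_1 \cap V_2 \xto{\alpha} V_1 \oplus V_2 \xto{\beta} V_1 + V_2 \to 0,
\]
with $\alpha(x) = (x, -x)$ and $\beta(a, b) = a + b$, together with the analogous sequence for $I_1, I_2$. Tensoring the first sequence with $R$ over $\kk$ preserves exactness, and the multiplication maps $\phi_{12}\colon R \otimes (V_1 \cap V_2) \to I_1 \cap I_2$, $m_1 \oplus m_2 \colon R\otimes V_1 \oplus R\otimes V_2 \to I_1 \oplus I_2$, and $\phi^{12}\colon R \otimes (V_1 + V_2) \to I_1 + I_2$ assemble these two sequences into a commutative ladder with exact rows.

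First, I would verify condition (1) of Definition \ref{def:goodgen} for each $V_i$. By hypothesis both $V_1 \cap V_2$ and $V_1 + V_2$ carry good filtrations, so Lemma \ref{lem:seq}(1) applied to the top row yields a good filtration on $V_1 \oplus V_2$, and Lemma \ref{lem:seq}(2) then produces good filtrations on each $V_i$ individually.

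Next, I would apply the snake lemma to the ladder. The outer vertical maps $\phi_{12}$ and $\phi^{12}$ are surjective by hypothesis, so their cokernels vanish and the snake lemma forces $\coker(m_1 \oplus m_2) = 0$; in particular each $m_i\colon R \otimes V_i \to I_i$ is surjective, so $V_i$ generates $I_i$ as an $R$-module. The kernel portion of the snake sequence then collapses to a short exact sequence
\[
0 \to \ker \phi_{12} \to (\ker m_1) \oplus (\ker m_2) \to \ker \phi^{12} \to 0,
\]
whose outer terms have good filtrations by hypothesis. Applying Lemma \ref{lem:seq}(1) to this sequence gives a good filtration on the middle direct sum, and Lemma \ref{lem:seq}(2) finally yields one on each $\ker m_i$, establishing condition (2) of Definition \ref{def:goodgen}.

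The argument is essentially formal once the Mayer--Vietoris ladder is set up; the only step that goes beyond a direct application of Lemma \ref{lem:seq} is the snake-lemma deduction that $V_i$ actually generates $I_i$ (which is a priori assumed only for the intersection and the sum). I do not expect any serious obstacle here beyond this small bookkeeping point.
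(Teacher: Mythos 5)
Your proof is correct and takes essentially the same approach as the paper: both set up the Mayer--Vietoris short exact sequences for the $V_i$ and the $I_i$, tensor with $R$, and read off good filtrations on the $\ker m_i$ from Lemma~\ref{lem:seq} applied to the resulting commutative diagram. The only difference is cosmetic: you invoke the snake lemma explicitly to extract the short exact sequence of kernels and to deduce surjectivity of the $m_i$, whereas the paper records the same information by asserting the diagram has exact rows and columns.
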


\begin{proof}
From the short exact sequence 
\[0 \to V_{1} \cap V_{2} \to V_{1} \oplus V_{2} \to V_{1} + V_{2} \to 0\]
we see that $V_{i}$ has a good filtration for $i=1,2$, by Lemma \ref{lem:seq}. Based on this sequence, we have the following commutative diagram with exact rows and columns:

\[
\xymatrix@R1.4pc{
& 0 \ar[d] &  0 \ar[d] & 0 \ar[d] & \\
0 \ar[r] & K \ar[r] \ar[d] & K_1 \oplus K_2 \ar[r] \ar[d] & K'  \ar[d] \ar[r] & 0\\
0 \ar[r] &  R\oo (V_1 \cap V_{2}) \ar[r] \ar[d] & R\oo V_{1} \bigoplus R\oo V_{2} \ar[d]^{m_{1}\oplus m_{2}} \ar[r] & R\oo (V_{1} + V_{2}) \ar[r] \ar[d]& 0\\
0 \ar[r] & I_1 \cap I_2 \ar[r] \ar[d] & I_1 \oplus I_2 \ar[d] \ar[r] & I_1 + I_2 \ar[r] \ar[d] & 0 \\
 & 0 & 0 & 0 &
}\]
Since by definition the kernels $K$ and $K'$ have good filtrations, so do the kernels $K_i$ by Lemma \ref{lem:seq}.
\end{proof}



\section{Generic perfection and good filtrations}

Throughout, a graded algebra $R$ is a graded $\kk$-algebra of the form $R=\bigoplus_{i\geq 0}  R_i$ with $R_0 = \kk$ and each $R_i$ finite dimensional over $\kk$. By a graded $G$-algebra we mean a graded algebra $R$ that is a $G$-algebra such that each graded piece $R_i$ is a $G$-submodule. We similarly define a $(G,R)$-module to be an $R$-module and a $G$-module such that these structures are compatible in the obvious way. Again, for a graded $(G,R)$-module we require the graded pieces to be $G$-modules.

\smallskip

As before, let $G$ is a connected reductive group. 
We recall that for a $G$-algebra $R$ and a finitely generated $(G,R)$-module $M$,  $R^U$ is a finitely generated $\kk$-algebra and $M^U$ is a finitely generated $R^U$-module (see  \cite[Theorems 9.4 and 16.8]{grossbook}).

\begin{proposition}\label{prop:goodgen}
Let $R$ be $G$-algebra with a good filtration, $M$ a $(G,R)$-module that is finitely generated over $R$. Let $\mc{S}$ be a finite set of generators of the $R^U$-module $M^U$. Assume that there exists a $G$-module $V \subset M$ with a good filtration such that $\mc{S} \subset V$. Then $M$ has a good filtration, and $V$ yields good generators for $M$.
\end{proposition}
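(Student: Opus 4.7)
The plan is to verify the two conditions of Definition \ref{def:goodgen} directly for $V$, and then to deduce the good filtration on $M$ as a consequence. The key leverage comes from the $U$-invariant reformulation of condition (2) in the remark following Definition \ref{def:goodgen}: since $R\otimes V$ has a good filtration by Theorem \ref{thm:mathieu} (as both $R$ and $V$ do), it suffices to show that the multiplication map $m\colon R\otimes V\to M$ is surjective on $U$-invariants.

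To check this, I first note that $\mc{S}\subset M^U$ together with the $G$-equivariance of $V\hookrightarrow M$ forces $\mc{S}\subset V^U$. For each $s\in\mc{S}$, the tensor $1\otimes s$ lies in $(R\otimes V)^U$ because $1\in R^U$, and clearly $m(1\otimes s)=s$. Since $m$ is $R$-linear, its restriction $m^U\colon (R\otimes V)^U\to M^U$ is $R^U$-linear, so $\op{im}(m^U)$ is an $R^U$-submodule of $M^U$ containing $\mc{S}$; by the generating hypothesis on $\mc{S}$ it must then equal all of $M^U$. Invoking the equivalence from the remark, $m$ is in fact surjective and $\ker m$ has a good filtration. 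Combined with the given good filtration on $V$, both clauses of Definition \ref{def:goodgen} are verified, so $V$ yields good generators of $M$. The good filtration on $M$ itself then follows by applying Lemma \ref{lem:seq}(1) to
\[
0\to\ker m\to R\otimes V\to M\to 0,
\]
whose first two terms have good filtrations by the preceding paragraph and by Theorem \ref{thm:mathieu}, respectively.

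The only substantive input I expect to lean on is the equivalence of condition (2) with the surjectivity of $m^U$; this is where the $U$-acyclicity of modules with good filtrations enters, and the excerpt cites it to an external reference. Granting that tool, the proof is essentially a one-step translation from the $R^U$-generating set $\mc{S}$ up to a $G$-module of good generators, with no further obstacle.
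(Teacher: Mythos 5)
Your proof is correct and follows essentially the same route as the paper's: form the multiplication map $m\colon R\otimes V\to M$, use Theorem \ref{thm:mathieu} to get a good filtration on the source, check surjectivity of $m^U$ using that $\mc{S}\subset V^U$ generates $M^U$ over $R^U$, and then invoke the $U$-invariant criterion (the paper cites \cite[Lemma 2.11]{me}, you cite the remark after Definition \ref{def:goodgen}, which point to the same tool). The only cosmetic omission is that the paper first reduces to $V$ finite-dimensional (needed so that ``$V$ yields good generators'' makes literal sense via a finite spanning set $\mc{P}$), which is routine since the good filtration on $V$ is exhaustive by finite-dimensional pieces and $\mc{S}$ is finite.
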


\begin{proof}
We can assume $V$ to be finite dimensional. We consider the multiplication map
\[ \pi: \, R \oo V \, \rightarrow \, M, \]
which is a map of finitely generated $(G,R)$-modules. $R \oo V$ has a good filtration, by Theorem \ref{thm:mathieu}. We have an induced map $\pi^U:  (R \oo V)^U \rightarrow M^U$. Then $\pi^U$ is surjective by construction, since its restriction to the subspace $R^U \oo V_i^U$ is surjective as $\mc{S} \subset V_i^U$. By \cite[Lemma 2.11]{me} this implies that $\pi$ is surjective, and both $M$ and $\ker \pi$ have a good filtration.
\end{proof}

In particular, in the graded setting the above shows that one needs to check the property of good filtrations only at the degrees of the minimal generators of the $R^U$-module $M^U$.

\begin{corollary}\label{cor:res}
Let $R$ be $G$-algebra with a good filtration, $M$ a $(G,R)$-module with a good filtration, finitely generated over $R$ and of projective dimension $c$. Then $M$ has a $G$-equivariant projective resolution $P_\bullet$ of length $c$ such that all terms and syzygies have good filtrations.
\end{corollary}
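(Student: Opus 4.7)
The plan is to construct the resolution one term at a time by iterating Proposition \ref{prop:goodgen}, with the projective dimension hypothesis guaranteeing that the process terminates cleanly after $c$ steps. The overall picture is a standard syzygy induction, with the two novelties being that each free term is chosen $G$-equivariantly of the form $R \oo V_i$ so that Theorem \ref{thm:mathieu} ensures its good filtration, and each syzygy inherits a good filtration as the kernel produced by Proposition \ref{prop:goodgen}.

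For the base step I would first choose a finite generating set $\mc{S}_0$ of $M^U$ as an $R^U$-module (which exists by the invariant-theoretic finiteness result quoted just before Proposition \ref{prop:goodgen}). Since $M$ carries a good filtration $0 = M^{(0)} \subset M^{(1)} \subset \cdots$ exhausting $M$ by finite-dimensional $G$-submodules, each of which inherits a (finite) good filtration from the initial segment, the set $\mc{S}_0$ sits inside some stage $V_0 := M^{(n)}$. Proposition \ref{prop:goodgen} then yields a $G$-equivariant surjection $\pi_0 : P_0 := R \oo V_0 \onto M$ whose kernel $Z_1 := \ker \pi_0$ has a good filtration, while $P_0$ has a good filtration by Theorem \ref{thm:mathieu}. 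Iterating with $Z_1$ in place of $M$ (noting $Z_1$ is finitely generated over $R$ since $R$ is Noetherian and $Z_1 \subset P_0$), and so on, produces after $c-1$ steps an exact complex
\[
 Z_c \into P_{c-1} \to \cdots \to P_1 \to P_0 \onto M,
\]
in which every free term $P_i = R \oo V_i$ and every syzygy $Z_j$ has a good filtration.

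For termination I would invoke the standard characterization of projective dimension: because $\pdim_R M = c$, the $c$th syzygy in any projective resolution is automatically $R$-projective, so one may simply set $P_c := Z_c$ and stop. The resulting exact sequence $0 \to P_c \to P_{c-1} \to \cdots \to P_0 \to M \to 0$ is then a $G$-equivariant projective resolution of length $c$ in which all terms and all syzygies carry good filtrations. The only mild subtlety is checking at each stage that the hypotheses of Proposition \ref{prop:goodgen} are met, i.e.\ that a finite $R^U$-generating set of $(Z_i)^U$ can always be enclosed in a finite-dimensional $G$-submodule of $Z_i$ admitting a good filtration; but this follows immediately from the definition of a good filtration as an exhaustive ascending filtration by finite-dimensional pieces, so there is no real obstacle beyond bookkeeping.
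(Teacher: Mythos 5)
Your proposal follows essentially the same inductive argument as the paper: at each stage apply Proposition~\ref{prop:goodgen} to produce a $G$-equivariant surjection $R \oo V_i \onto M_i$ with kernel again admitting a good filtration, and stop at step $c$ using the standard fact that the $c$th syzygy of a module of projective dimension $c$ is projective. The only difference is cosmetic --- you spell out explicitly that the good filtration of each syzygy supplies a finite-dimensional stage containing a finite $R^U$-generating set of its $U$-invariants, a point the paper leaves implicit when invoking Proposition~\ref{prop:goodgen}.
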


\begin{proof}
We will construct $P_\bullet$ inductively. Suppose that for some $0\leq i \leq c-1$, we have a finitely generated $(G,R)$-module $M_i$ that is the $i$th syzygy, having a good filtration (with $M_0=M$). Pick a $G$-module $V_i \subset M_i$ as in Proposition \ref{prop:goodgen} giving good generators of $M_i$. Then the multiplication map
\[ \pi_i: \, R \oo V_i \, \rightarrow \, M_i\]
is surjective and $\ker \pi_i$ has a good filtration, so we put $P_i:=R \oo V_i$ with the next syzygy $M_{i+1} = \ker \pi_i$. For the last term of the resolution we set $P_c:=M_c$, which must be a projective $R$-module by simple homological considerations.
\end{proof}

\begin{remark}\label{rem:res}
\begin{itemize}
\item[(a)] More precisely, the above shows that $P_\bullet$ can be chosen with the property that for each $0\leq i \leq c-1$, there exists a finite dimensional $G$-module $V_i$ with a good filtration such that $P_i \cong R \oo_{\kk} V_i$ as $(G,R)$-modules.
\item[(b)] It is easy to see that if $R$ and $M$ are additionally graded then the resolution $P_\bullet$ as well as the isomorphisms $P_i \cong R \oo_{\kk} V_i$ above can be chosen to be also graded.
\end{itemize}
\end{remark}

By Remark \ref{rem:res} (a), we need to handle the last term in the complex, for which we have the following result (cf. \cite[Chapter IV, Lemma 1.1.4]{hashibook}).

\begin{lemma}\label{lem:last}
Let $R$ be a graded polynomial $G$-algebra, and $F$ a finitely generated graded free $(G,R)$-module (i.e. free as an $R$-module). Then $F$ a has finite exhaustive filtration $F_\bullet$ of graded free $(G,R)$-modules with short exact sequences
\[0\to F_{i-1} \to F_i \to R\oo V_i \to 0 \quad \mbox{for } i\geq 0, \, \mbox{with } F_{-1}=0,\]
for some finite dimensional $G$-modules $V_i$. Furthermore, if $F$ has a good filtration then one can choose all $V_i$ to have good filtrations.
\end{lemma}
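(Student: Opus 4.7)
\emph{Plan.} The key observation is that although $F$ is free as an $R$-module, the $R$-module isomorphism $F \cong R \oo (F/R_+F)$ generally cannot be chosen $G$-equivariantly; the strategy is to build the filtration by splitting off the lowest-degree generators one graded piece at a time, where the $G$-action \emph{does} give a canonical lift.

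Let $d_0$ be the smallest integer such that $F_{[d_0]} \neq 0$, and set $V_0 := F_{[d_0]}$, a finite-dimensional $G$-module. Since $d_0$ is minimal, no contribution from $R_+ F$ lands in degree $d_0$, so $V_0$ sits inside $F$ as a canonical $G$-equivariant lift of the lowest graded piece of the minimal generators $F/R_+F$. Hence the multiplication map $R \oo V_0 \to F$ is an injective map of $(G,R)$-modules with image the graded free $(G,R)$-submodule $F_0 := R\cdot V_0 \cong R\oo V_0$. The quotient $F/F_0$ is again a finitely generated graded free $(G,R)$-module (its remaining $R$-generators live in strictly higher degrees), and we apply the same construction to it to produce $V_1$ and then lift to $F_1 \subset F$ fitting in $0 \to F_0 \to F_1 \to R\oo V_1 \to 0$. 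Since $R\oo V_1$ is $R$-free the sequence splits over $R$, so $F_1$ is itself graded free over $R$. Iterating yields $0 = F_{-1} \subset F_0 \subset F_1 \subset \cdots$, and since $F$ has only finitely many minimal $R$-generators, the process terminates at $F$ in finitely many steps.

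For the good filtration claim, suppose $F$ has a good filtration. The graded decomposition $F = \bigoplus_k F_{[k]}$ is a direct sum of $G$-modules, since $G$ preserves the grading and the projection onto each graded piece is $G$-equivariant. Thus Lemma \ref{lem:seq}(2) implies that each $F_{[k]}$ has a good filtration; in particular so does $V_0 = F_{[d_0]}$. Then $F_0 \cong R\oo V_0$ has a good filtration by Theorem \ref{thm:mathieu}, and applying Lemma \ref{lem:seq}(1) to $0 \to F_0 \to F \to F/F_0 \to 0$ shows $F/F_0$ has a good filtration as well. Repeating the argument inductively on $F/F_0$ produces each $V_i$ with a good filtration.

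The principal delicacy is precisely the lack of $G$-equivariance in the $R$-module splitting $F \cong R \oo (F/R_+F)$; restricting to the minimum non-vanishing degree is the one place where a canonical equivariant lift is automatic, so the construction must proceed by passing to the quotient and iterating, rather than attempting to decompose $F$ in one step.
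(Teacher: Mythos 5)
Your proposal is correct and follows essentially the same strategy as the paper: split off the lowest non-vanishing graded piece (which is canonically a $G$-submodule, avoiding the non-equivariance of a general $R$-module splitting), verify the resulting quotient is again graded free, and iterate; the good filtration claim then follows from Lemma~\ref{lem:seq} and Theorem~\ref{thm:mathieu} exactly as in the paper's inductive argument.
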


\begin{proof}
Since $R$ is a graded polynomial ring, any graded free module is a finite direct sum of the shifted rings $R(d)$, with $d\in \mathbb{Z}$.

We will additionally see that each $F/F_i$ is a graded free $(G,R)$-module. We proceed by induction on $i$, the case $i=-1$ being vacuous. Let $i\geq 0$, so that $F/F_{i-1}$ is a free graded free $(G,R)$-module. Let $V_i$ be the non-zero graded piece of $F/F_{i-1}$ of lowest possible degree. Being a graded piece, $V_i$ must be a $G$-module. By our initial remark, we have an inclusion map $R\oo V_i \to F/F_{i-1}$ of free $(G,R)$-modules, which splits over $R$ by construction. We set $F_i$ to be $R\oo V_i = F_{i}/F_{i-1} \subset F/F_{i-1}$, which is free as an $R$-module, and so is $F/F_i$ due to the splitting.

Finally, if $F$ has a good filtration then by induction we assume $F_{i-1}$ has one, therefore $F/F_{i-1}$ as well by Lemma \ref{lem:seq} (1). Since by construction $V_i$ is a direct summand of $F/F_{i-1}$ as a $G$-module, $V_i$ has a good filtration by Lemma \ref{lem:seq} (2). By Theorem \ref{thm:mathieu} $R\oo V_i$ has a good filtration and so by Lemma \ref{lem:seq} (1), $F_i$ also has a good filtration, thus finishing the proof by induction.
\end{proof}

Below a finite dimensional $G$-module $V$ will be viewed as $G\times \kk^*$-module via some power $t\in \bb{Z}_{>0}$ of the scalar multiplication, i.e. $c \cdot v = c^t v$, for $c \in \kk^*$ and $v\in V$. The following is one of the main tools we use to lift singularities and good filtrations (we note that parts (1) and (2) are well-known).

\begin{proposition}\label{prop:main}
Let $X$ be a Cohen--Macaulay affine noetherian $G\times \kk^*$-scheme, $V$ a finite dimensional $G$-module, and $f: X \to V$ a $G \times \kk^*$-equivariant morphism. Assume that both $V$ and $X$ are $G$-good. Let $Z\subset V$ be a closed subscheme that is $G \times \kk^*$-stable and Cohen--Macaulay, and consider $Y:=f^{-1}(Z)$ the (scheme-theoretic) preimage. If $\codim_Y X = \codim_Z V$, then:
\begin{enumerate}
\item $Y$ is Cohen--Macaulay.
\item Assume that $X$ is Gorenstein. Then $Y$ is Gorenstein if and only if $Z$ is so.
\item If $Z$ is good, then $Y$ is good.
\item If $\mc{P}$ is a set of good (resp. minimal) defining equations of $Z$, then $f^{*}(\mc{P})$ is a set of good (resp. minimal) defining equations of $Y$.
\end{enumerate}
\end{proposition}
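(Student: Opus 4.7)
The plan is to handle all four statements uniformly by pulling back an equivariant free resolution along $f^{*}:\kk[V]\to\kk[X]$, with the codimension hypothesis precisely ensuring the pullback remains acyclic.

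First, combining Corollary \ref{cor:res}, Remark \ref{rem:res}(b), and Lemma \ref{lem:last} applied to the terminal term, I construct a $G\times\kk^{*}$-equivariant graded free resolution $P_{\bullet}\to\kk[Z]\to0$ over $\kk[V]$ of length $c:=\codim_{Z}V$ (the length follows from Auslander--Buchsbaum, since $\kk[V]$ is regular and $Z$ is Cohen--Macaulay), in which each $P_{i}$ carries a $G\times\kk^{*}$-filtration whose subquotients have the form $\kk[V]\oo W_{i}^{(j)}$, where each $W_{i}^{(j)}$ is a $G$-module with a good filtration. For (4), I arrange that the first differential has the form $\kk[V]\oo V_{\mc{P}}\to\kk[V]\to\kk[Z]\to 0$, with $V_{\mc{P}}$ spanned by the prescribed good generators $\mc{P}$ of $I_{Z}$.

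Applying $-\oo_{\kk[V]}\kk[X]$ yields a complex of graded $(G\times\kk^{*},\kk[X])$-modules with zeroth homology $\kk[Y]$. The core technical step --- which I expect to be the main obstacle --- is proving this pulled-back complex is acyclic. This is the classical generic perfection assertion: as $X$ is Cohen--Macaulay and $\codim_{Y}X=c$, the Fitting ideals of the pulled-back differentials have the required grade, so the Peskine--Szpiro acyclicity lemma applies. Hence we obtain a free resolution of $\kk[Y]$ over $\kk[X]$ of length $c=\codim_{Y}X$, and Auslander--Buchsbaum at each localization of $Y$ yields (1). For (3), each $P_{i}\oo\kk[X]$ inherits a good filtration by Theorem \ref{thm:mathieu} applied termwise to the pulled-back filtration; inducting through the short exact sequences of syzygies via Lemma \ref{lem:seq}(1), every syzygy --- in particular $\kk[Y]$ --- has a good filtration. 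For (4), the same syzygy induction shows that $\ker(\kk[X]\oo f^{*}(V_{\mc{P}})\to I_{Y})$ has a good filtration, so $f^{*}(\mc{P})$ are good defining equations of $Y$; minimality is preserved because $f^{*}$ is graded-equivariant, so the graded Betti numbers of minimal resolutions pull back unchanged.

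For (2), I compute the canonical module via tensor--Hom adjunction and $\Ext$-base change along the perfect complex $P_{\bullet}\oo\kk[X]$:
\[\omega_{Y}\;\cong\;\Ext^{c}_{\kk[X]}(\kk[Y],\omega_{X})\;\cong\;\Ext^{c}_{\kk[V]}(\kk[Z],\kk[V])\oo_{\kk[V]}\omega_{X}.\]
Since $\kk[V]$ is regular, $\Ext^{c}_{\kk[V]}(\kk[Z],\kk[V])\cong\omega_{Z}$ up to the invertible twist by $\omega_{V}$, and since $X$ is Gorenstein, $\omega_{X}$ is invertible. Hence $\omega_{Y}$ is an invertible $\kk[Y]$-module iff $\omega_{Z}\oo_{\kk[Z]}\kk[Y]$ is, which by comparing types at closed points $y\in Y$ and their images $z=f(y)\in Z$ is equivalent to $\omega_{Z}$ being an invertible $\kk[Z]$-module. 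The characterization of Gorensteinness for CM rings via invertibility of the canonical module then yields the equivalence.
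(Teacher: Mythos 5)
Your proof takes essentially the same route as the paper: construct a graded $G$-equivariant free resolution of $\kk[Z]$ over $\kk[V]$ via Corollary \ref{cor:res} and Lemma \ref{lem:last}, pull it back along $f^{*}$, invoke generic perfection (the paper cites the Eagon--Northcott generic perfection theorem, you cite Peskine--Szpiro acyclicity --- same circle of ideas) to get acyclicity from $\codim_{Y}X=\codim_{Z}V$, and then use Theorem \ref{thm:mathieu} and Lemma \ref{lem:seq} to propagate good filtrations through the syzygies; this yields (1), (3), (4).

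The only genuine divergence is in (2), where the paper uses the shorter observation that the pulled-back minimal graded free resolution is still minimal, so the rank of its last term is preserved, and that rank is $1$ if and only if the relevant ring is Gorenstein (cf.\ \cite[Proposition 1.2.10]{weymanbook}). You instead pass to canonical modules: $\omega_{Y}\cong\Ext^{c}_{\kk[V]}(\kk[Z],\kk[V])\oo_{\kk[V]}\omega_{X}$, and compare CM types. This is correct, but it is really the same fact in different clothing --- the type of a graded CM ring equals the last graded Betti number --- and your phrasing ``comparing types at closed points $y\in Y$ and their images $z=f(y)$'' obscures the direction ``$Y$ Gorenstein $\Rightarrow Z$ Gorenstein'': one cannot a priori test $\omega_{Z}$ at arbitrary closed points of $Z$ via points of $Y$, since $Y\to Z$ need not be surjective. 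What saves the argument (and is implicit in both your proof and the paper's) is the graded setting: it suffices to check invertibility of $\omega_{Z}$ at the irrelevant maximal ideal, and the graded map $\kk[Z]\to\kk[Y]$ carries the irrelevant ideal into the irrelevant ideal, so the minimal presentation of $\omega_{Z}$ base-changes to the minimal presentation of $\omega_{Y}$ and the types agree. It would strengthen your write-up to make this reduction explicit rather than gesture at ``closed points.''
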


\begin{proof}
Let $R=\kk[V]$, $X=\op{Spec} S$, and $I\subset R$ the ideal of $Z$. Then $f^* : R \to S$ is an injective $G$-equivariant morphism of graded $G$-algebras. Put $c=\codim_Y X = \codim_Z V$. We will repeatedly use the Generic Perfection Theorem \cite{EN} (see also \cite[Theorem 1.2.14]{weymanbook}) to prove these statements.

Let $F_\bullet$ be the minimal free resolution of $R/I$, which has length $c$. By the Generic Perfection Theorem, $S\oo_R F_\bullet$ is a minimal free resolution of $S\oo_R (R/I)$ of length $c$. This proves (1), and part (2) follows also since the rank of the last term is preserved (see \cite[Proposition 1.2.10]{weymanbook}). The claim in part (4) regarding the minimality of the defining equations also follows for analogous reasons.

To prove (3), we consider now a graded free resolution of the form $P_\bullet$ as in Corollary \ref{cor:res} -- see Remark \ref{rem:res} (note that the last term $P_c$ is also (graded) free). By the Generic Perfection Theorem, $S\oo_R F_\bullet$ is a free resolution $(G,R)$-modules of $S\oo_R (R/I)$. 

Take any term $A=P_i$. By Lemma \ref{lem:last}, $A$ a has finite filtration $A_\bullet$ of free $(G,R)$-modules with factors of the form $R\oo V_j$, for $G$-modules $V_j$ that have good filtrations. Then $S \oo_R A_\bullet$ is a filtration $S\oo_R A$ by $(G,S)$-modules, with factors isomorpic to $S\oo_R V_j$. By Theorem \ref{thm:mathieu}, these factors have good filtrations. Then by induction, we see by Lemma \ref{lem:seq} (1) that $S \oo_R A$ has a good filtration as well.

Thus,  $S \oo_R P_\bullet$ is an equivariant resolution of $S\oo_R (R/I)$ such that each term has a good filtration. Using Lemma \ref{lem:seq} (1) repeatedly, we see that all the syzygies have good filtrations. In particular, so does $S\oo_R (R/I)$.

For part (4) regarding good defining equations, set $V_{\mc{P}} = \op{span}_{\kk} \mc{P} \subset I$. Since $\mc{P}$ form good generators, we note that, by construction, the resolution $P_\bullet$ above can be chosen such that $P_0 =R$ and $P_1 = R\oo V_{\mc{P}}$ (see Proposition \ref{prop:goodgen} and Corollary \ref{cor:res}). By the above, since both the terms and the syzygies in  $S \oo_R P_\bullet$ will have good filtrations, the kernel of the onto map $S \oo V_{\mc{P}} \to S$ will have a good filtration. Thus, the set $f^{*}(\mc{P})$ forms good defining equations for $Y$.
\end{proof}


%

\section{Symplectic case}

Throughout in this section we let $G=\GL(E)\times \Sp(F)$ act naturally on $X=E\oo F$. We start by noting that $X$ is good (i.e. $\kk[X]$ has a good filtration), which follows from Lemma \ref{lem:good} and \cite[Section 4.9]{andjan}. We recall (see \cite[Corollary 2.4]{lovett05})
\begin{equation}\label{eq:codim1}
\codim_X \ol{O}_{r_1,r_2} =  (e-r_1)(f-r_1)+ \binom{r_1-r_2}{2}.
\end{equation}

We begin with an easy result.

\begin{lemma}\label{lem:smooth1}
Let $J$ be a $b\times b$ invertible skew-symmetric matrix, and $A$ an $a\times b$ matrix with $\rank A=a$. Then any $a\times a$ skew-symmetric matrix $S$ can be written in the form $S=A J B^t + B J A^t$, for some $a \times b$ matrix $B$.
\end{lemma}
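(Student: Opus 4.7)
The plan is to observe that the map $\phi : \Mat_{a,b}(\kk) \to \{a \times a \text{ skew-symmetric matrices}\}$ given by $B \mapsto AJB^t + BJA^t$ is $\kk$-linear, so it suffices to prove surjectivity. The key trick is to split $\phi$ through the auxiliary linear map $\psi : \Mat_{a,b}(\kk) \to \Mat_{a,a}(\kk)$, $\psi(B) = AJB^t$. Since $J^t = -J$, one has
\[
\psi(B)^t = (AJB^t)^t = B J^t A^t = -BJA^t,
\]
so $\phi(B) = \psi(B) - \psi(B)^t$. Thus the problem decouples into (i) showing $\psi$ is surjective onto all $a\times a$ matrices, and (ii) showing that every skew-symmetric $S$ can be written as $T - T^t$ for some $T$.

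For (i), I will use that $J$ is invertible and $\rank A = a$ to conclude $\rank(AJ) = a$. Hence the linear map $\kk^b \to \kk^a$, $v \mapsto (AJ)v$, is surjective. Given any target matrix $T \in \Mat_{a,a}(\kk)$, choosing each row of $B$ so that the corresponding column of $B^t$ is a preimage of the corresponding column of $T$ under this surjection produces $B$ with $\psi(B) = T$. For (ii), given a skew-symmetric $S$, I will simply take $T$ to be the strictly upper triangular matrix with $T_{ij} = S_{ij}$ for $i<j$ and all other entries zero; a direct check shows $T - T^t = S$ (using $S_{ii}=0$, which is the alternating condition relevant in the symplectic setting). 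Combining, choose $T$ as in (ii) and then $B$ as in (i), so that $\phi(B) = T - T^t = S$.

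There is no real obstacle here: the whole argument reduces to the observation $\phi(B) = \psi(B) - \psi(B)^t$, after which both subproblems are elementary linear algebra. The only mild care point is characteristic $2$, where ``skew-symmetric'' should be interpreted as alternating (vanishing diagonal), which is precisely the condition needed in step (ii) and which is in any case the notion coming out of the symplectic form on $F$.
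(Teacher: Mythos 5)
Your proof is correct and takes essentially the same route as the paper: both reduce to writing $S = T - T^t$ and then use the invertibility of $J$ together with $\rank A = a$ to solve $AJB^t = T$ (the paper phrases this via a left inverse $Y$ of $JA^t$ and sets $B = TY$). The only difference is presentational; you spell out the surjectivity of $B \mapsto AJB^t$ column by column, while the paper produces $B$ directly from the left inverse.
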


\begin{proof}
We can write $S=X-X^t$, for some $a \times a$ matrix $X$. Let $Y$ be a left inverse of the matrix $J A^t$, i.e. $Y J A^t= I_a$. We can then put $B=XY$.
\end{proof}

We start with the special orbit closures of full rank $r_1=e$.

\begin{proposition}\label{prop:sympinv}
Let $e < f$. Then for any even $r$ with $2e-f\leq r\leq e$, the variety $\ol{O}_{e, r}$ is normal, Gorenstein, and if $\cha \kk =0$ has rational singularities. Furthermore, the isotropic rank conditions in (\ref{eq:rank}) given by the Pfaffians yield good defining equations of $\ol{O}_{e, r}$ in $X$.
\end{proposition}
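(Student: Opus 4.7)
\emph{Proof plan.} The strategy is to view $\psi$ as the affine quotient map for the $\Sp(F)$-action and lift singularity properties from the Pfaffian variety $Y_r \subset \bigwedge^2 E$ of skew-symmetric matrices of rank at most $r$. By the First Fundamental Theorem of invariant theory for $\Sp(F)$, the ring $\kk[X]^{\Sp(F)}$ is generated by the matrix entries of $\psi(\phi)$, and the assumption $e<f$ ensures $\psi: X \to \bigwedge^2 E$ is surjective, so we identify $\bigwedge^2 E \cong X /\!/ \Sp(F)$. Since $r_1 = e$ makes the rank condition $\rank\phi \le r_1$ vacuous, scheme-theoretically $\ol{O}_{e,r} = \psi^{-1}(Y_r)$.

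The main technical step is to apply Proposition \ref{prop:main} with $V = \bigwedge^2 E$, the equivariant map $\psi$, and $Z = Y_r$. Both $X$ and $V$ are $G$-good by Lemma \ref{lem:good} together with the good filtration of the exterior algebra. The Pfaffian variety $Y_r$ is Cohen--Macaulay, Gorenstein, and good, with the sub-Pfaffians of size $r+2$ furnishing good defining equations (all classical and characteristic-free, by J\'ozefiak--Pragacz, De Concini, and others). The codimensions match: $\codim_X \ol{O}_{e,r} = \binom{e-r}{2} = \codim_V Y_r$ by \eqref{eq:codim1}. Proposition \ref{prop:main} then yields that $\ol{O}_{e,r}$ is Cohen--Macaulay, Gorenstein, and good, with the pullback of the sub-Pfaffians as good defining equations.

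Normality will follow from Serre's criterion $R_1+S_2$. Cohen--Macaulay gives $S_2$, while for $R_1$ a direct codimension count using \eqref{eq:codim1} and the constraints \eqref{eq:condi} shows that every proper orbit closure $\ol{O}_{r_1',r_2'} \subsetneq \ol{O}_{e,r}$ has codimension at least $2$ in $\ol{O}_{e,r}$; hence the $G$-stable singular locus, contained in the complement of the open orbit, has codimension at least $2$.

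For rational singularities in characteristic zero, I would produce an equivariant collapsing-type desingularization of $\ol{O}_{e,r}$ compatible with the Kempf--Lascoux--Weyman resolution of $Y_r$. Lemma \ref{lem:smooth1} ensures $d\psi_\phi$ is surjective whenever $\rank \phi = e$, so pulling back the Kempf--Weyman resolution along $\psi$ gives a smooth total space over the full-rank locus; this can be extended to a global resolution via an isotropic flag construction on the non-full-rank part, after which the vanishing $R^i\pi_*\mathcal{O} = 0$ for $i > 0$ follows from Kempf's vanishing on the associated homogeneous space. I expect the main obstacle to be assembling this global resolution: the pulled-back resolution of $Y_r$ is smooth only over the full-rank locus of $X$, and extending it smoothly across the non-full-rank part while preserving birationality requires a careful choice of the incidence variety. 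The remaining ingredients are routine applications of Proposition \ref{prop:main}, Serre's criterion, and classical invariant theory.
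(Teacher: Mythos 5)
Your overall strategy for the Cohen--Macaulay, Gorenstein, good-filtration and normality claims is the same as the paper's: identify $\psi$ with the quotient map via the first fundamental theorem for $\Sp(F)$, match codimensions using \eqref{eq:codim1}, apply Proposition \ref{prop:main} to $Z=\ol{O}_r\subset\bigwedge^2E$, and finish normality with Serre's criterion and an orbit-codimension count. However, there is a gap at the start: you assert that ``scheme-theoretically $\ol{O}_{e,r}=\psi^{-1}(Y_r)$'' because the condition $\rank\phi\le e$ is vacuous. That only gives the set-theoretic equality; Proposition \ref{prop:main} produces conclusions about the scheme-theoretic preimage, and a priori that scheme could be non-reduced, in which case your ``good defining equations'' would cut out the wrong scheme. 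The paper closes this by first getting Cohen--Macaulayness (hence $S_1$, no embedded components) of the preimage scheme and then checking generic reducedness: Lemma \ref{lem:smooth1} shows $\psi$ is smooth on the full-rank locus $U$, so $\psi|_U^{-1}(O_r)$ is smooth, hence the preimage is reduced at a generic point. You have all the ingredients (you invoke Lemma \ref{lem:smooth1} elsewhere), but the step must actually be made.

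The second, more serious gap is in the rational-singularities part. You propose to build a birational equivariant desingularization by pulling back the Kempf--Lascoux--Weyman resolution of $\ol{O}_r$ along $\psi$ over the full-rank locus and then ``extending across the non-full-rank part,'' and you correctly identify that you do not know how to do this; no such global birational resolution is constructed in the paper either. The paper avoids the problem entirely: it uses the (not necessarily birational) collapsing $Z\to X$ of a vector bundle over an isotropic Grassmannian from \cite[Proposition 2.9]{lovett05}, computes via \cite[Theorem 4.2]{lovett05} that the associated complex has $F_0=\kk[X]$ and $F_i=0$ for $i<0$, and then applies \cite[Theorem 2.1]{lorwey}, which yields rational singularities from a non-birational collapsing under exactly these cohomological conditions. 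So the missing idea is that birationality of the resolution is not needed; replacing your speculative gluing construction with this criterion is what makes the argument go through.
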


\begin{proof}
By the fundamental theorem of invariant theory for the symplectic group \cite{deconproc}, the map (\ref{eq:psi}) is in fact the onto map
\[\psi: X \longrightarrow  X/\!\!/\Sp(F) \cong \bigwedge^2 E.\]
We view $\bigwedge^2 E$ also as a $\kk^*$-module via the induced action from $E$ by scalar multiplication. We further view it as a trivial $\Sp(F)$-module. Then $\psi$ is a $G\times \kk^*$-equivariant map. 

Let $O_r \subset \bigwedge^2 E$ be the locally closed subvariety of matrices of rank $r$. Then the closure $\ol{O}_r$ is Gorenstein (see \cite[Theorem 17]{kleplaks}) and the set $\mc{P}$ of $(r+2) \times (r+2)$ Pfaffians give good defining equations as discussed in \cite[Section 4.1]{me} (see also Lemma \ref{lem:throwgens}).


Consider the open $U \subset X$ of elements of full rank $e$. By Lemma \ref{lem:smooth1}, the differential of $\psi_{| U} : U \to \bigwedge^2 E$ is onto at each point. Hence, $\psi_{| U}$ is a smooth morphism (\cite[Proposition 10.4]{hartshorne}).

Set $Y_r := \psi^{-1}(\ol{O}_r)$ as a scheme. Clearly, the underlying variety of $Y_r$ is $\ol{O}_{e,r}$. Since  $\psi_{| U}$ is a smooth morphism, we have $\codim_X {\ol{O}_{e,r}} = \codim_{ \bigwedge^2 E} {\ol{O}_r}$ (this also follows by an easy dimension comparison). By Proposition \ref{prop:main}, $Y_r$ is Gorenstein with good defining equations $\psi^*(\mc{P})$. To show that  $Y_r=\ol{O}_{e,r}$ we are left to show that $Y_r$ is generically reduced, for which show it is enough to see that the scheme $\psi^{-1}_{|U}(O_r)$ is smooth. This follows by \cite[Proposition 10.1]{hartshorne} since $\psi_{| U}$ and $O_r$ are smooth.

To obtain normality, by (\ref{eq:codim1}) we see that $\ol{O}_{e,r}$ is regular in codimension $1$, since we have $\codim_{\ol{O}_{e,r}} \ol{O}_{e-1,r} \geq 2$ and $\codim_{\ol{O}_{e,r}} \ol{O}_{e,r-2} \geq 2$. Thus the claim follows by Serre's criterion \cite[Theorem 8.22A]{hartshorne}.

We are left to prove the claim on rational singularities in $\cha \kk=0$. By \cite[Proposition 2.9]{lovett05}, we can write the $\ol{O}_{e,r}$ as the image of a (not necessarily birational) collapsing map $Z \to X$, where $Z$ is the total space of a vector bundle over an isotropic Grassmannian. This yields a complex $F_\bullet$ as in \cite[Theorem 5.1.2]{weymanbook}. It follows from the calculation in \cite[Theorem 4.2]{lovett05} (see also \cite[Theorem 4.6]{lovett05}) that $F_0 = \kk[X]$, and $F_i = 0$, for $i<0$.  We conclude that $\ol{O}_{e,r}$ has rational singularities as in \cite[Theorem 2.1]{lorwey}.
\end{proof}

\begin{remark}\label{rem:errors}
We note some similarities of the above with the proof of \cite[Theorem 4.6]{lovett05} (which is in characteristic zero). Nevertheless, in the latter there are some gaps in the proof of showing that the preimage is reduced and normal. There are similar gaps in the orthogonal case \cite[Theorems 4.9 and 4.10]{lovett05}. The claims in \cite[Proposition 4.3]{lovett07} regarding the Gorenstein property are incorrect, e.g. by the above result. Lastly, the papers \cite{lovett05, lovett07} state that the rank conditions (\ref{eq:rank}) give precisely all the $G$-orbit closures, not mentioning the exceptional case $(f/2,0)$ in the orthogonal case, see Proposition \ref{prop:eqred}.
\end{remark}

\begin{theorem}\label{thm:mainsymp}
The varieties $\ol{O}_{r_1, r_2}$ are normal, and when $\cha \kk =0$ have rational singularities. Moreover, the minors and Pfaffians corresponding to the rank conditions (\ref{eq:rank}) form good defining equations of $\ol{O}_{r_1, r_2}$ in $X$.
\end{theorem}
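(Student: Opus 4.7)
The plan is to reduce to the full-rank case (Proposition \ref{prop:sympinv}) via a collapsing argument; we may assume $r_1 < e$. Fix an $r_1$-dimensional subspace $E' \subset E$ with parabolic stabilizer $P \subset \GL(E)$, and set $Y_0 := \ol{O}^{(E')}_{r_1,r_2} \subset E' \oo F$, the corresponding full-rank orbit closure, which is $P$-stable. Proposition \ref{prop:sympinv} (applied with $E$ replaced by $E'$) shows that $Y_0$ is normal and Gorenstein, has rational singularities in characteristic zero, is $P$-good, and the relevant Pfaffians of $\psi$ form good defining equations of $Y_0$ in $E' \oo F$. I would then form the induced homogeneous fiber bundle
\[
Z \;:=\; \GL(E) \times_P Y_0 \;\longrightarrow\; \GL(E)/P \;\cong\; \Gr(r_1, E),
\]
which inherits these local properties fiberwise; in particular $Z$ is normal, Gorenstein, has rational singularities in characteristic zero, and its structure sheaf has a good filtration. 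The natural morphism $\pi : Z \to E \oo F$ is proper and $G$-equivariant, its image is $\ol{O}_{r_1, r_2}$, and it restricts to an isomorphism on the open orbit $O_{r_1,r_2}$ because $\im \phi^* \subset E$ is uniquely determined once $\rank \phi = r_1$.

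Next, I would invoke the collapsing machinery of \cite{kempf76, me} to transfer properties from $Z$ to $\ol{O}_{r_1, r_2}$. The core input is $\pi_* \mc{O}_Z = \mc{O}_{\ol{O}_{r_1, r_2}}$ and $R^i\pi_* \mc{O}_Z = 0$ for $i > 0$, which I would establish by resolving $\kk[Y_0]$ equivariantly by good-filtration $P$-modules (Corollary \ref{cor:res}), inducing up to a complex of $\GL(E)$-equivariant vector bundles on $\Gr(r_1, E)$, and computing the hypercohomology via Bott's theorem in characteristic zero and Kempf's vanishing in positive characteristic. With these vanishings in hand, normality of $\ol{O}_{r_1, r_2}$ follows from Stein factorization, rational singularities from the standard criterion for proper birational morphisms from a variety with rational singularities, and the isomorphism $\kk[\ol{O}_{r_1, r_2}] \cong H^0(Z, \mc{O}_Z)$ yields a good filtration from the resolution above (each term has a good filtration by Theorem \ref{thm:mathieu}, and the vanishings collapse the spectral sequence to degree zero).

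For good defining equations, I would let $W \subset \kk[E \oo F]$ be the $G$-submodule spanned by the $(r_1+1)$-minors of $\phi$ and the $(r_2+2)$-Pfaffians of $\psi(\phi)$. Each of these two pieces has a good filtration (the minors live in $\bigwedge^{r_1+1} E^* \oo \bigwedge^{r_1+1} F^*$, and the Pfaffians in an appropriate Schur--Weyl component of $\Sym^{r_2/2+1}(\bigwedge^2 E^*)$, both tensor products of modules with good filtrations), hence so does $W$ by Lemma \ref{lem:seq}. Since the defining ideal $I$ of $\ol{O}_{r_1, r_2}$ has a good filtration by the preceding paragraph, Proposition \ref{prop:goodgen} reduces the claim to verifying that $W^U$ generates $I^U$ as a $\kk[E \oo F]^U$-module. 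This is a highest-weight-vector calculation, which can be carried out by tracking $U$-invariant generators through the collapsing $\pi$, and then applying Lemma \ref{lem:throwgens} to discard any redundancies.

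\textbf{Main obstacle:} The most delicate step is the higher direct image vanishing $R^i\pi_* \mc{O}_Z = 0$ and, relatedly, the preservation of the good filtration under the pushforward. Because $Y_0$ is not merely a linear subspace of $E' \oo F$ but a singular subvariety, the classical Kempf collapsing theorem does not apply verbatim; instead we must run the spectral sequence using the good equivariant resolution of $\kk[Y_0]$ afforded by Proposition \ref{prop:sympinv}, which is precisely where the full-rank result becomes indispensable.
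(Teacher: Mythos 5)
Your approach is essentially the paper's: pass to the full-rank orbit closure $Y_0 = \ol{O'}_{r_1,r_2}$ inside $\kk^{r_1}\oo F$ (handled by Proposition \ref{prop:sympinv}), form the bundle $Z = G\times_P Y_0$ over $\Gr(r_1,E)$, and push forward along the proper birational map $\pi: Z \to \ol{O}_{r_1,r_2}$. The paper's proof is short precisely because it only cites \cite[Theorems 1.3, 1.4, 1.5]{me}, which already package the generalized Kempf collapsing for the case where the inner variety $Y_0$ is singular but normal with rational singularities (resp.\ $G$-good, with good defining equations). Your \textbf{main obstacle} paragraph identifies exactly the issue that those theorems were designed to resolve; you mention \cite{me} once but then propose to re-derive $R^i\pi_*\mc O_Z=0$ and the preservation of good filtrations from scratch. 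That re-derivation, as you sketch it, would not go through directly: after inducing the free $P$-equivariant resolution of $\kk[Y_0]$ from Corollary \ref{cor:res}, the terms are vector bundles on $\Gr(r_1,E)$ twisted by the $G$-modules $V_i$, which are in general neither line bundles nor direct sums of bundles with dominant weights, so Kempf vanishing alone does not apply in positive characteristic, and in characteristic zero Bott's theorem requires a case analysis that is exactly what \cite{me} organizes via good filtrations and Frobenius splitting. In short: citing \cite[Theorems 1.3--1.5]{me} makes the entire middle block of your argument (and the final highest-weight-vector check on $U$-invariants) unnecessary; what remains is your first paragraph plus the paper's one-line observation that if $r_1=f$ then $r_2=r_1$ and $\ol{O'}_{r_1,r_2}=V$, an edge case your ``we may assume $r_1 < e$'' does not cover.
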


\begin{proof}
We have a proper collapsing map $\pi: Z \to X$ given by projection, where $Z=\{(\phi, \mc{R}) \in X \times \op{Gr}(r_1,E) \mid \phi \in \mc{R}\oo F\}$. One can write $Z=G\times_P V$, where $P\subset G$ is a parabolic with Levi subgroup $L=\GL(\kk^{r_1})\times \Sp(F)$. Here, $L$ acts on $V=\kk^{r_1}\oo F$ naturally, while the unipotent radical of $P$ acts trivially. Let $\ol{O'}_{r_1,r_2} \subset V$ be the $L$-orbit closure given by the respective conditions (\ref{eq:rank}). Since $r_1\leq f$, we know by Proposition \ref{prop:sympinv} that $\ol{O'}_{r_1,r_2}$ is normal, and when $\cha \kk = 0$ has rational singularities (note that if $r_1=f$, then $r_2=r_1$ and  $\ol{O'}_{r_1,r_2}= V$).
Thus, the claim about the normality and rational singularities of $\ol{O}_{r_1,r_2}=G \cdot \ol{O'}_{r_1,r_2} = \pi(\ol{O'}_{r_1,r_2})$ follows from \cite[Theorems 1.3 and 1.4]{me} (see also \cite{kempf76}) and the claim on defining equations from \cite[Theorem 1.5]{me}.
\end{proof}

\begin{remark}
In the case $r_1=r_2$, the proof above gives the collapsing of $G\times_P V$ onto the determinantal varieties $\ol{O}_{r_1,r_2}$ in which case we can use  \cite[Theorems 1.2 and 1.3]{me}  (as done in \cite[Section 4.1]{me}) to get the stronger (well-known) fact that they are strongly $F$-regular.
\end{remark}

By abuse of terminology, when $\cha \kk = 0$ we call a variety strongly $F$-regular if it is of strongly $F$-regular type. We mention a sharper result for special orbit closures that follows readily from \cite[Theorem 1.2]{me} (the proof will follow the notations therein).

\begin{proposition}\label{prop:special1}
The varieties $\ol{O}_{r,0}$ are strongly $F$-regular (thus, Cohen--Macaulay).
\end{proposition}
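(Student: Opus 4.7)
The plan is to realize $\ol{O}_{r,0}$ as the image of a collapsing of a homogeneous bundle in which the \emph{entire} fiber representation (rather than a proper $L$-subvariety) surjects onto the orbit closure, so that \cite[Theorem 1.2]{me} applies directly to give strong $F$-regularity. This mirrors the determinantal case $r_1=r_2$ highlighted in the remark following Theorem \ref{thm:mainsymp}. The crucial observation is that the collapsing used in the proof of Theorem \ref{thm:mainsymp}, namely $G\times_P(\kk^{r_1}\oo F)$ via a Grassmannian on $E$, presents $\ol{O}_{r,0}$ only as the image of a proper $L$-orbit closure $\ol{O'}_{r,0}$ in the fiber, which is not enough; I will instead build a collapsing using the \emph{isotropic} Grassmannian on $F$, which exploits precisely the vanishing $\psi(\phi)=0$ to flatten the fiber condition.

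Concretely, since (\ref{eq:condi}) gives $2r\leq f$, I fix an isotropic $r$-plane $\mc{R}\subset F$ and take $P=\GL(E)\times P'\subset G$, where $P'\subset \Sp(F)$ is the stabilizer of $\mc{R}$, with Levi $L=\GL(E)\times \GL(\mc{R})\times \Sp(\mc{R}^\perp/\mc{R})$. Let $V=E\oo \mc{R}$, regarded as an $L$-module with the $\Sp$-factor acting trivially, and extended to a $P$-module by letting the unipotent radical act trivially. Then $Z=G\times_P V$ is a homogeneous vector bundle over the isotropic Grassmannian $G/P\cong \op{IGr}(r,F)$, and the natural collapsing is
\[\pi: Z \to X, \qquad (\phi,\mc{R}')\mapsto \phi.\]
A matrix $\phi\in X$ lies in $\op{im}\pi$ iff $\im\phi\subset \mc{R}'$ for some isotropic $r$-plane $\mc{R}'\subset F$, equivalently iff $\rank\phi\leq r$ and $\im\phi$ is $\beta$-isotropic, i.e.\ $\psi(\phi)=0$; hence $\pi(Z)=\ol{O}_{r,0}$. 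Over the dense orbit (where $\rank\phi=r$) the plane $\mc{R}'=\im\phi$ is uniquely determined, so $\pi$ is birational.

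Since $V$ is a vector space it is trivially strongly $F$-regular (and good), so the hypotheses of \cite[Theorem 1.2]{me} are satisfied with $V'=V$; thus the image $\ol{O}_{r,0}$ is strongly $F$-regular, and in particular Cohen--Macaulay. The main obstacle is not technical but conceptual: identifying the right collapsing for the family $\ol{O}_{r,0}$, namely one that parametrizes isotropic $r$-planes \emph{in $F$} rather than $r$-subspaces of $E$, so that the fiber becomes an unconstrained affine space and \cite[Theorem 1.2]{me} delivers the stronger conclusion beyond the normality/rational singularities already established in Theorem \ref{thm:mainsymp}.
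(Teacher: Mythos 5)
Your proposal reproduces the paper's proof: the paper also realizes $\ol{O}_{r,0}$ as the image of the collapsing $Z=G\times_P(E\otimes\mc{R})\to X$ over the symplectic isotropic Grassmannian $\op{IGr}(r,F)$, with $V=E\otimes\kk^r$ carrying the natural $L$-action and the unipotent radical acting trivially, and concludes by invoking \cite[Theorems 1.2 and 1.3]{me}. The only point you gloss over by calling $V$ ``trivially \dots\ good'' is that goodness of $V$ as an $L$-module is not automatic from $V$ being a vector space; the paper justifies it via the Cauchy formula \cite[Theorem 2.3.2]{weymanbook} (or Lemma~\ref{lem:good}).
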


\begin{proof}
$\ol{O}_{r,0}$ can be realized as the image of a collapsing via the symplectic isotropic Grassmannian $\op{IGr}(r, F)$. Namely, let $Z=\{(\phi, \mc{R}) \in X \times \op{IGr}(r,F) \mid \phi \in E\oo \mc{R}\}$. Then the image of the (proper) projection map $Z \to X$ is $\ol{O}_{r,0}$. We can write $Z=G \times_P V$, where the Levi subgroup of $P$ can be identified with $L=\GL(E) \times \GL(\kk^{r})$ acting on $V=E\oo \kk^{r}$ naturally, and the unipotent radical of $P$ acts trivially on $V$. We have $G \cdot V = \ol{O}_{r,0}$, and $V$ is $L$-good by the Cauchy formula \cite[Theorem 2.3.2]{weymanbook} (or by Lemma \ref{lem:good}). The result follows by \cite[Theorems 1.2 and 1.3]{me}.
\end{proof}

In particular, when $r$ is maximal, i.e. $r=\min\{e,f/2\}$, $\ol{O}_{r,0}$ is the (reduced) nullcone, whose $F$-regularity was proved in \cite[Theorem 3.6]{vaib2} as well, using different methods. 

\begin{remark}\label{rem:infactFreg1}
Assume that $f\geq 2e$. Then in fact the orbit closures $\ol{O}_{e,r}$ are also strongly $F$-regular, strengthening Proposition \ref{prop:sympinv}. This can be seen as follows. 

We have $\codim_X \psi^{-1}(0) = \dim \bigwedge^2 E$, i.e. the nullcone is a complete intersection. This implies that all fibers of $\psi$ have the same dimension (e.g. see \cite[Section 8.1]{popvin}), and hence, the map $\psi$ is flat (e.g. \cite[Exercise 10.9]{hartshorne}). Since $\psi^{-1}(0)=\ol{O}_{e,0}$ is $F$-rational, and so are the varieties $\ol{O}_r \subset \bigwedge^2 E$ \cite{baetica}, by \cite[Theorem 4.3]{aberenes} we get that $\ol{O}_{e,r} = \psi^{-1}(\ol{O}_r)$ is $F$-rational. Since it is Gorenstein by Proposition \ref{prop:sympinv}, it is also strongly $F$-regular by \cite[Corollary 4.7]{hh}.
\end{remark}

\section{Orthogonal case}

Throughout in this section we denote $G=\GL(E)\times \op{SO}(F)$, and $X=E\oo F$, and  $\cha \kk \neq 2$ (see the remark at the end for some comments in this case).

We note that $X$ is good (i.e. $\kk[X]$ has a good filtration), which follows again from Lemma \ref{lem:good} and \cite[Section 4.9]{andjan}.

We recall (see \cite[Corollary 2.4]{lovett05})
\begin{equation}\label{eq:codim2}
\codim_X \ol{O}_{r_1,r_2} =  (e-r_1)(f-r_1)+ \binom{r_1-r_2+1}{2}.
\end{equation}

We begin again with an easy result, which is the analogue of Lemma \ref{lem:smooth1}.

\begin{lemma}\label{lem:smooth2}
Let $K$ be a $b\times b$ invertible symmetric matrix, and $A$ an $a\times b$ matrix with $\rank A=a$. Then any $a\times a$ symmetric matrix $S$ can be written in the form $S=A K B^t + B K A^t$, for some $a \times b$ matrix $B$.
\end{lemma}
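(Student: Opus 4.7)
The plan is to mirror the proof of Lemma \ref{lem:smooth1} essentially verbatim, using the symmetry of $K$ in place of its skew-symmetry.

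First, since $\cha \kk \neq 2$, any symmetric matrix $S$ can be written as $S = X + X^t$ for some $a \times a$ matrix $X$ (for example, $X = \tfrac{1}{2} S$). This is the analogue of the decomposition $S = X - X^t$ used in the skew case, and is the only place where the hypothesis $\cha \kk \neq 2$ enters.

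Next, since $A$ has full row rank $a$ and $K$ is invertible, the $a\times b$ matrix $AK$ also has rank $a$, and hence admits a right inverse: there is a $b \times a$ matrix $Y$ with $(AK) Y = I_a$. Setting $B = X^t Y^t$, so that $B^t = Y X$, we compute
\[
AK B^t = (AK) Y X = I_a X = X,
\]
and taking transposes, using $K^t = K$,
\[
B K A^t = (A K B^t)^t = X^t.
\]
Adding the two identities gives $A K B^t + B K A^t = X + X^t = S$, which is what we want.

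There is no real obstacle here: the only delicate point is the $\cha \kk \neq 2$ hypothesis needed to split $S$ as $X + X^t$, but this is already standing throughout the section. The argument is completely parallel to Lemma \ref{lem:smooth1}, with the roles of the right inverse of $JA^t$ (in the skew case) and the right inverse of $AK$ (here) interchanged to accommodate the fact that now $AKB^t$ and $BKA^t$ are transposes of each other rather than negatives.
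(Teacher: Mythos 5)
Your proof is correct and is essentially the same as the paper's: the paper takes a left inverse $Y$ of $KA^t$ and sets $B=\tfrac12 SY$, while your $Y^t$ (the transpose of your right inverse of $AK$) is exactly such a left inverse, so your $B=\tfrac12 S Y^t$ is the same choice up to renaming. Both arguments use $\cha\kk\neq 2$ only to split $S=\tfrac12 S+\tfrac12 S^t$.
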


\begin{proof}
Let $Y$ be a left inverse of the matrix $K A^t$, i.e. $Y K A^t= I_a$. We can then put $B=1/2 \cdot SY$ (since $\cha \kk \neq 2$).
\end{proof}

Most of the results follow analogously to the symplectic case, therefore we will often only sketch the arguments. We explain in more detail why some exceptional cases occur.

\begin{proposition}\label{prop:orthinv}
Let $e < f$. Then for any $r$ with $2e-f\leq r\leq e$, the variety defined by the rank conditions $(e, r)$ in (\ref{eq:rank}) is Cohen--Macaulay; it is Gorenstein if and only if either $e-r$ is odd, or $r=0$, or $r=e$; it is normal if and only if $r\neq 2e-f$, in which case when $\cha \kk =0$ it has rational singularities. Moreover, the minors corresponding to the isotropic rank conditions in (\ref{eq:rank}) yield good defining equations in $X$.
\end{proposition}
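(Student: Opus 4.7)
The plan is to follow the pattern of Proposition \ref{prop:sympinv} using the orthogonal version of the first fundamental theorem of invariant theory \cite{deconproc}, which identifies $\psi: X \to \Sym_2 E$ with the affine quotient map $X \to X/\!\!/\op{SO}(F)$. The $\GL(E)$-orbit closures $\ol{O}_r \subset \Sym_2 E$ are the classical symmetric determinantal varieties: they are normal, Cohen--Macaulay, have rational singularities when $\cha \kk = 0$, are defined by the $(r+1)\times(r+1)$ minors as good generators, and are Gorenstein exactly when $e - r$ is odd or $r \in \{0, e\}$.

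Lemma \ref{lem:smooth2} furnishes the needed smoothness: the differential of $\psi$ is surjective at every full-rank $\phi$, so $\psi|_U : U \to \Sym_2 E$ is smooth on the open $U \subset X$ of full-rank maps. Taking $Y_r := \psi^{-1}(\ol{O}_r)$ scheme-theoretically, codimensions match, and Proposition \ref{prop:main} transfers Cohen--Macaulayness, the Gorenstein equivalence (using that $X$ is Gorenstein), and the good defining equations from $\ol{O}_r$ to $Y_r$; the pulled-back minors are exactly the $(r+1)\times(r+1)$ minors of the symmetric composition $\psi(\phi)$, which is precisely the isotropic rank condition in (\ref{eq:rank}). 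Generic reducedness follows from smoothness of $\psi|_U^{-1}(O_r)$ as the preimage of the smooth stratum under a smooth morphism, so the Cohen--Macaulay scheme $Y_r$ is reduced and coincides with $\ol{O}_{e,r}$.

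For the normality dichotomy I would apply Serre's criterion. Formula (\ref{eq:codim2}) gives $\codim_{\ol{O}_{e,r}} \ol{O}_{e-1,r} = f - 2e + r + 1$ and $\codim_{\ol{O}_{e,r}} \ol{O}_{e,r-1} = e - r + 1$; the latter is $\geq 2$ whenever the stratum exists (when $r = e$ there is no boundary, as $\ol{O}_{e,e} = X$), while the former is $\geq 2$ exactly when $r > 2e - f$. Since $e < f$ excludes the coincidences $r = 0 = 2e - f$ and $r = e = 2e - f$, for $r > 2e - f$ the boundary of $\ol{O}_{e,r}$ lies in codimension $\geq 2$, and the smooth pullback $Y_r \cap U = \psi|_U^{-1}(\ol{O}_r)$ is normal because $\ol{O}_r$ is, so $R_1$ holds and $\ol{O}_{e,r}$ is normal. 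Rational singularities in $\cha \kk = 0$ then follow exactly as in Proposition \ref{prop:sympinv}: express $\ol{O}_{e,r}$ as the image of the orthogonal collapsing of \cite[Proposition 2.9]{lovett05}, use \cite[Theorem 4.2]{lovett05} to see that the associated Weyman complex $F_\bullet$ has $F_0 = \kk[X]$ and $F_i = 0$ for $i < 0$, and invoke \cite[Theorem 2.1]{lorwey}.

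The main obstacle is the converse at $r = 2e - f$: one has to show that $\ol{O}_{e,r}$ is genuinely singular along the codimension-one stratum $\ol{O}_{e-1,r}$, not merely that this stratum has codimension one. My plan is a direct Jacobian computation at a generic $\phi_0 \in \ol{O}_{e-1,r}$: the pulled-back defining minors are polynomials of degree $2(r+1)$ in the entries of $\phi$, and their gradients collapse in the extra directions supplied by the one-dimensional kernel of $\phi_0$, forcing the Zariski tangent space to strictly exceed the expected dimension. This is transparent in the smallest case $e = 2$, $f = 3$, $r = 1$, where $\ol{O}_{2,1} \subset \kk^6$ is the hypersurface $(bf-ce)^2 + (cd-af)^2 + (ae-bd)^2 = 0$, visibly singular along the entire rank-one locus of $\phi$, itself codimension one in $\ol{O}_{2,1}$; the general case reduces to carrying out the same Jacobian bookkeeping uniformly.
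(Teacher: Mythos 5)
Your proposal follows essentially the same route as the paper's proof: identify $\psi$ as the $\SO(F)$-quotient map via the first fundamental theorem, use Lemma~\ref{lem:smooth2} for smoothness on the full-rank open set, pull back the well-known properties of symmetric determinantal varieties through Proposition~\ref{prop:main}, apply Serre's criterion with the codimension count from (\ref{eq:codim2}), handle the non-normal boundary case $r=2e-f$ by a Jacobian computation, and derive rational singularities from the Lovett collapsing complex. The one place your argument is slightly more explicit than the paper is the worked example for $e=2$, $f=3$, $r=1$ supporting the Jacobian step, which the paper dispatches with an ``it is easy to see.''
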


\begin{proof}
The proof is analogous to that of Proposition \ref{prop:sympinv}, but now we invoke the invariant theory for the orthogonal group \cite{deconproc}. The map (\ref{eq:psi}) is in fact the onto map
\[\psi: X \longrightarrow  X/\!\!/\SO(F) \cong \op{S}_2(E).\] 
By Lemma \ref{lem:smooth2}, the restriction of this map to the open subset of elements of full rank is again smooth. Note that the subvariety of $\ol{O}_r \subset \op{S}_2(E)$ of symmetric matrices of rank $\leq r$ is Cohen--Macaulay \cite{kutz} (in fact strongly $F$-regular, see \cite[Section 4.1]{me}), and it is Gorenstein if and only if either $e-r$ is odd, or $r=0$, or $r=e$ (see \cite[Corollary 6.3.7]{weymanbook} -- note that by \cite[Theorem 4.4]{stanley} this holds in arbitrary characteristic since the Hilbert function is independent of that -- cf. \cite[Proposition 6.1.1]{weymanbook}).  Thus, the claims on the Cohen--Macaulay, Gorenstein, and good generator properties follow in the same way by Proposition \ref{prop:main} and  \cite[Section 4.1]{me} (see also Lemma \ref{lem:throwgens}). 

Denote by $Y$ be the variety defined by the rank conditions $(e,2e -f)$ in (\ref{eq:rank}) (which is equal to $\ol{O}_{e, 2e-f}$ unless $f=2e$). This case is not normal due to $\codim_{Y} \ol{O}_{e-1, 2e-f}=1$. It is easy to see using the Jacobian matrix \cite[Section 5]{hartshorne} that a representative of the orbit $O_{e-1, 2e-f}$ is indeed in the singular locus of $Y$. Thus $Y$ is not regular in codimension 1, hence not normal \cite[Theorem 8.22A]{hartshorne}.

The claim on rational singularities follows the same way as in the symplectic case.
\end{proof}

In the following case we can make the strongest claims, analogous to Proposition \ref{prop:special1}.

\begin{proposition}\label{prop:special2}
The varieties $\ol{O}_{r,0}$ are strongly $F$-regular (thus, Cohen--Macaulay).
\end{proposition}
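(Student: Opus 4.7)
The plan is to mirror the proof of Proposition \ref{prop:special1} almost verbatim, replacing the symplectic isotropic Grassmannian with the orthogonal one. Specifically, I would realize $\ol{O}_{r,0}$ as the image of a collapsing of bundles: set
\[
Z = \{(\phi, \mc{R}) \in X \times \op{OGr}(r, F) \mid \phi \in E \oo \mc{R}\},
\]
where $\op{OGr}(r,F)$ denotes the Grassmannian of $r$-dimensional isotropic subspaces of $F$. In the special case $r = f/2$ (with $f$ even) where $\op{OGr}(r,F)$ has two connected components, I would restrict attention to the single component matching the chosen irreducible component of $\ol{O}_{f/2, 0}$. The projection $\pi: Z \to X$ is proper, and since elements of $O_{r,0}$ have image that is automatically isotropic of dimension at most $r$, we have $\pi(Z) = \ol{O}_{r,0}$.

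Next, I would identify $Z = G \times_P V$, where $P \subset G$ is the parabolic stabilizing a fixed $r$-dimensional isotropic subspace of $F$. The Levi subgroup of $P$ is $L = \GL(E) \times \GL(\kk^r) \times \SO(\kk^{f-2r})$, acting on $V = E \oo \kk^r$ via the natural action of the first two factors (the third factor and the unipotent radical of $P$ act trivially). The module $V$ is $L$-good by the Cauchy formula \cite[Theorem 2.3.2]{weymanbook}, equivalently by Lemma \ref{lem:good}, since both exterior algebras $\bigwedge E$ and $\bigwedge \kk^r$ have good filtrations as $\GL$-modules. Clearly $G \cdot V = \ol{O}_{r,0}$.

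The conclusion that $\ol{O}_{r,0}$ is strongly $F$-regular then follows directly from \cite[Theorems 1.2 and 1.3]{me}, which transfer strong $F$-regularity from the Levi-module $V$ to the image of the collapsing $G \times_P V \to X$. Cohen--Macaulayness is a consequence of strong $F$-regularity (via reduction mod $p$ in characteristic zero, by the definition adopted just before Proposition \ref{prop:special1}).

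I do not expect any serious obstacle here; the argument is structurally identical to the symplectic case, with the orthogonal isotropic Grassmannian playing the role of the symplectic one. The only mild subtlety is the bookkeeping in the $r = f/2$ exceptional case, where one must select the appropriate component of $\op{OGr}(f/2, F)$ so that its collapsing hits the chosen component of $\ol{O}_{f/2,0}$ rather than the other; since the two components of $\ol{O}_{f/2,0}$ are isomorphic, it is harmless to work with either.
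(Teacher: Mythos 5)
Your proposal is correct and takes essentially the same approach as the paper: realize $\ol{O}_{r,0}$ as the image of a collapsing $G \times_P V \to X$ over the orthogonal isotropic Grassmannian, check $V = E \oo \kk^r$ is $L$-good via Cauchy, handle the disconnectedness of $\op{IGr}(f/2,F)$ by restricting to a single component, and invoke \cite[Theorems 1.2 and 1.3]{me}. Your version is slightly more explicit than the paper's (which just says ``essentially the same as Proposition \ref{prop:special1}'' and flags the disconnected-Grassmannian subtlety), and you are even a touch more precise in noting that the Levi has a third factor $\SO(\kk^{f-2r})$ acting trivially, which the paper elides.
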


\begin{proof}
The proof is essentially the same as the proof of Proposition \ref{prop:special1}, with the orthogonal isotropic Grassmannian $\op{IGr}(r,F)$ replacing the role of the symplectic one. The only relevant difference is that when $f$ is even, $\op{IGr}(f/2,F)$ is disconnected. Then in the argument we just use the connected components of $\op{IGr}(f/2,F)$, creating two orbit closures under collapsing that are the distinct irreducible components of the variety given by the rank conditions $(f/2,0)$ in (\ref{eq:rank}). The claims follow the same way by \cite[Theorem 1.2]{me}.
\end{proof}

\begin{remark}\label{rem:infactFreg2}
Assume that $f\geq 2e$, i.e. the nullcone is a complete intersection. As in Remark \ref{rem:infactFreg1}, we can say more about $\ol{O}_{e,r}$. The fact that they are $F$-rational follows analogously by \cite[Theorem 4.3]{aberenes}, \cite[Section 4.1]{me} and Proposition \ref{prop:special2}. When $e-r$ is odd,  $\ol{O}_{e,r}$ is Gorenstein by Proposition \ref{prop:orthinv}, hence also strongly $F$-regular by \cite[Corollary 4.7]{hh}. To obtain strong $F$-regularity for the case $e-r$ even as well (at least when $f\geq 2e+2$), one can use a collapsing over $\Gr(e,\kk^{e+1})$ as in the proof of the theorem below, and use the direct summand property established in \cite[Proposition 3.4]{me} together with \cite[Theorem 5.5]{hh}.
\end{remark}

\begin{theorem} \label{thm:mainorth}
The $\GL(E)\times \SO(F)$-orbit closures in $\ol{O}_{r_1, r_2}$ are normal if and only if either $r_2\neq 2 r_1 - f$ or $r_2=0$ or $r_2=e_1$, in which case when $\cha \kk = 0$ they also have rational singularities. When  $\ol{O}_{r_1, r_2}$ is not normal, it is Cohen--Macaulay if and only if $r_1=e$. The minors corresponding to all the rank conditions (\ref{eq:rank}) give (good) generators of their defining ideals, unless $(r_1, r_2)=(f/2, 0)$ with $f$ even.
\end{theorem}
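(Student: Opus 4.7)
The approach is to follow exactly the pattern of the proof of Theorem \ref{thm:mainsymp}: use the standard Grassmannian collapsing to reduce the general statement to the full-rank case already established in Proposition \ref{prop:orthinv}. Let $P\subset G$ be the parabolic preserving a fixed $r_1$-dimensional subspace $\kk^{r_1}\subset E$, with Levi subgroup $L=\GL(\kk^{r_1})\times \SO(F)$ acting on $V=\kk^{r_1}\oo F$ and unipotent radical acting trivially. Set $Z=G\times_P V$ with proper projection $\pi:Z\to X$, so that $\pi(G\times_P \ol{O'}_{r_1,r_2})=\ol{O}_{r_1,r_2}$, where $\ol{O'}_{r_1,r_2}\subset V$ denotes the $L$-orbit closure corresponding to $(r_1,r_2)$. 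Since $\ol{O'}_{r_1,r_2}$ has full $\GL(\kk^{r_1})$-rank, Proposition \ref{prop:orthinv} applies directly to it, the case $(r_1,r_2)=(f/2,0)$ being reducible and thus handled separately in Proposition \ref{prop:eqred}.

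In the first range $r_2\neq 2r_1-f$, or $r_2=0$, or $r_2=r_1$, Proposition \ref{prop:orthinv} (together with Proposition \ref{prop:special2} for $r_2=0$) gives normality of $\ol{O'}_{r_1,r_2}$ together with good defining equations via minors, and in characteristic zero rational singularities. Invoking the collapsing results \cite[Theorems 1.3, 1.4 and 1.5]{me} (cf.\ \cite{kempf76}) transfers all three properties to $\ol{O}_{r_1,r_2}$, with the $(r_1{+}1)\times(r_1{+}1)$ minors of $\phi$ supplying the additional $\GL(E)$-rank generators. This settles the ``if'' direction of normality, rational singularities, and the assertion on good defining equations.

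It remains to treat $r_2=2r_1-f$ with $r_2\notin\{0,r_1\}$. When $r_1=e$, Proposition \ref{prop:orthinv} directly says $\ol{O}_{e,2e-f}$ is Cohen--Macaulay but not normal. So the heart of the remaining statement is: for $r_1<e$, the variety $\ol{O}_{r_1,2r_1-f}$ is neither normal nor Cohen--Macaulay. Non-normality is immediate, since by the collapsing the normalization agrees with $G\cdot \widetilde{\ol{O'}}_{r_1,r_2}$ (where $\widetilde{\ol{O'}}_{r_1,r_2}$ is the normalization of the model in Proposition \ref{prop:orthinv}) and the two differ in codimension one, exactly as in the $r_1=e$ case. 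The failure of the Cohen--Macaulay property for $r_1<e$ is the main obstacle.

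My plan for this last step is to exploit the Kempf--Weyman geometric technique as employed in the proof of Proposition \ref{prop:sympinv}. The collapsing $G\times_P \ol{O'}_{r_1,r_2}\to \ol{O}_{r_1,r_2}$ yields a complex $F_\bullet$ of the form of \cite[Theorem 5.1.2]{weymanbook}, whose terms are direct sums of $\kk[X]$-modules of the form $\kk[X]\otimes H^i(\Gr(r_1,E),\mc{V})$ for suitable bundles $\mc{V}$ determined by the shape of $\ol{O'}_{r_1,r_2}$. For $r_1=e$ the Grassmannian collapses to a point and no twists are present, matching the Cohen--Macaulayness in Proposition \ref{prop:orthinv}. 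For $r_1<e$, the extra factor $(e-r_1)(f-r_1)$ in \eqref{eq:codim2} forces nontrivial twists of the ambient bundle, and I would use a direct Borel--Weil--Bott calculation on $\Gr(r_1,E)$ to exhibit a non-zero term $F_i$ with $i<0$, which by \cite[Theorem 2.1]{lorwey} (or by a direct depth computation with the resulting complex) obstructs the Cohen--Macaulay property. The technical heart is this Bott computation: one must identify the specific bundle whose cohomology contributes in negative degree precisely when $r_1<e$ and $r_2=2r_1-f$, and I would model it on the analogous calculations in \cite[Theorem 4.2]{lovett05} and \cite[Section 4]{lovett07}, correcting for the gaps pointed out in Remark \ref{rem:errors}.
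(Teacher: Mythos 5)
Your reduction via the Grassmannian collapsing is the same as the paper's, and the ``if'' direction of normality, rational singularities, and good defining equations is handled correctly. The genuine gap is in the last and hardest part: showing that for $r_2=2r_1-f$ with $0<r_2<r_1$ and $r_1<e$, the orbit closure is not Cohen--Macaulay. You propose a Borel--Weil--Bott computation to locate a nonzero term $F_i$ with $i<0$ in the Kempf--Weyman complex, but you do not carry it out, and moreover the logical link between the nonvanishing of such an $F_i$ and the failure of the Cohen--Macaulay property is not established in the paper (the cited result \cite[Theorem 2.1]{lorwey} concerns rational singularities, not depth of a possibly non-normal image). You have left the ``technical heart'' as a plan rather than a proof.

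The paper's actual argument is much shorter and avoids cohomology entirely. Non-normality of $\ol{O}_{r_1,r_2}$ for $r_2=2r_1-f$ transfers from the full-rank model $\ol{O'}_{r_1,r_2}$ through the collapsing, since \cite[Theorem 1.4]{me} says normality passes in \emph{both} directions along the collapsing morphism; no normalization map or codimension-one discrepancy needs to be exhibited. Then, using the codimension formula (\ref{eq:codim2}), one checks directly that when $r_1<e$ every sub-orbit closure $\ol{O}_{r_1-1,r_2}$ and $\ol{O}_{r_1,r_2-1}$ has codimension at least $2$ inside $\ol{O}_{r_1,r_2}$, so $\ol{O}_{r_1,r_2}$ is regular in codimension one. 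A variety that is regular in codimension one and not normal fails Serre's condition $S_2$, hence cannot be Cohen--Macaulay. Your remark that ``the two differ in codimension one, exactly as in the $r_1=e$ case'' is actually the opposite of what happens: for $r_1=e$ the singular locus has codimension one (failure of $R_1$), while for $r_1<e$ it has codimension $\geq 2$ (failure of $S_2$), and this distinction is precisely what makes the paper's CM argument work.
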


\begin{proof}
Note that the $r_2=0$ case follows from the previous proposition, and the case $r_2=e_1$ is trivial. Otherwise, we proceed by collapsing analogously to the proof of Theorem \ref{thm:mainsymp}. Note that the normality property is transferable through collapsing in both directions in \cite[Theorem 1.4]{me}, yielding the non-normal cases based on Proposition \ref{prop:orthinv}. But using (\ref{eq:codim2}), we see that if $r_1\neq e$ then these non-normal cases do become regular in codimension $1$, hence cannot be Cohen--Macaulay by \cite[Theorem 8.22A]{hartshorne}.
\end{proof}

\begin{remark}
When $f=4$, the singularities of these orbit closures have been studied recently in characteristic $0$ via methods of local cohomology \cite[Corollary 6.2 and Remark 6.3]{lorper}.
\end{remark}

Finally, we give the (good) defining equations in the exceptional case $\ol{O}_{f/2,0}$ (with $f$ even) as well. Let us explain in simple terms why the rank conditions (\ref{eq:rank}) give two irreducible components in the essential case $f=2e$ first.

Take an $e\times f$ generic matrix of variables, written in bloc form as $[X \,\, Y]$, with $X, Y$ of size $e \times e$. Then the symplectic rank condition in (\ref{eq:rank}) is equivalent to $X\cdot X^t + Y\cdot Y^t = 0$. Taking determinants, we obtain the two equations
\[\det(X) =\pm i^e \cdot \det(Y),\]
neither of which is in the ideal generated by the minors in (\ref{eq:rank}). We further take the $\SO(F)$-spans of the two equations above yielding the decomposition of the space spanned by the maximal minors into the direct sum of $G$-modules:
\[\bigwedge^e E^* \oo \bigwedge^e F= V_+ \oplus V_- \, \subset \kk[X].\]
In fact, this stems from the well-known fact that while $\bigwedge^{f/2} F$ is irreducible as an $\op{O}(F)$-module, it splits into two irreducibles $\bigwedge^{f/2} F=M_+ \oplus M_-$ as $\SO(F)$-modules. Thus, $V_\pm = \bigwedge^e E^*  \oo M_\pm$, as $G$-modules. 

More generally when $f\leq 2e$, write $\ol{O}_{f/2, 0}^{\pm}$ for the respective $G$-orbit closure vanishing on $\bigwedge^{f/2} E^* \oo M_\pm \, \subset \kk[X]$ and satisfying the rank conditions (\ref{eq:rank}).

\begin{proposition}\label{prop:eqred}
The minors corresponding to all the rank conditions (\ref{eq:rank}) together with $\bigwedge^{f/2} E^* \oo M_\pm \, \subset \kk[X]$ give (good) defining equations of $\ol{O}_{f/2, 0}^{\pm}$ in $X$.
\end{proposition}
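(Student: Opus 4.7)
The plan is to apply Lemma \ref{lem:reducible} to the defining ideals $I_\pm := I(\ol{O}_{f/2,0}^\pm)$, using the $G$-submodules
\[V_\pm := W \oplus \Big(\bigwedge^{f/2} E^* \oo M_\pm\Big) \subset I_\pm,\]
where $W \subset \kk[X]$ denotes the span of the minors corresponding to the rank conditions $(f/2, 0)$ -- namely the entries of $\psi(\phi)$, together with the $(f/2{+}1)\times(f/2{+}1)$ minors of $\phi$ when $e>f/2$. Using $\bigwedge^{f/2} F = M_+ \oplus M_-$ as $\SO(F)$-modules, one computes $V_+ \cap V_- = W$ and $V_+ + V_- = W \oplus \bigl(\bigwedge^{f/2} E^* \oo \bigwedge^{f/2} F\bigr)$; all these submodules are built out of dual Weyl modules for $\GL(E)$ and $\SO(F)$, so they inherit good filtrations by Theorem \ref{thm:mathieu}.

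The first step will be to verify the hypothesis on the sum: $V_+ + V_-$ gives good generators of $I_+ + I_- = I(\ol{O}_{f/2-1, 0})$. Set-theoretically one has $\ol{O}_{f/2,0}^+ \cap \ol{O}_{f/2,0}^- = \ol{O}_{f/2-1,0}$, since a maximal isotropic subspace lies in a unique family while proper isotropic subspaces extend to both. Now $V_+ + V_-$ contains all the minors corresponding to the rank conditions $(f/2{-}1,0)$: the $(f/2)\times(f/2)$ minors of $\phi$ span $\bigwedge^{f/2} E^* \oo \bigwedge^{f/2} F$, and the entries of $\psi(\phi)$ lie in $W$. These minors give good generators of $I(\ol{O}_{f/2-1, 0})$ by Theorem \ref{thm:mainorth}, and the extra summand $\bigwedge^{f/2+1} E^* \oo \bigwedge^{f/2+1} F$ of $W$ (when present) is ideal-theoretically redundant while preserving the good filtration; Proposition \ref{prop:goodgen} then delivers good generation by $V_+ + V_-$.

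The second step will be the intersection hypothesis: $W$ gives good generators of $I_+ \cap I_- = I(\ol{O}_{f/2,0})$, the ideal of the reducible union. I would derive this by a collapsing argument. Take the parabolic $P \subset G$ with Levi $L = \GL(\kk^{f/2}) \times \SO(F)$ and collapse $Z := G \times_P \ol{O'}_{f/2,0} \to X$, where the (reducible) inner $L$-variety $\ol{O'}_{f/2,0} \subset V = \kk^{f/2} \oo F$ is defined by $\psi|_V = 0$. By Proposition \ref{prop:orthinv} in the inner setting (with $e' = f/2 < f$), this variety is Cohen--Macaulay with good defining equations given by the entries of $\psi|_V$. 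Transferring through the collapsing as in the proofs of Theorems \ref{thm:mainsymp} and \ref{thm:mainorth} (via \cite[Theorem 1.5]{me}) should then yield good defining equations of the image $\ol{O}_{f/2,0}$ consisting of the entries of $\psi$ and the $(f/2{+}1)\times(f/2{+}1)$ minors of $\phi$, which are precisely the generators of $W$.

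The main obstacle will be the reducibility of $\ol{O'}_{f/2, 0}$: both the inner and outer varieties split into two isomorphic components interchanged by the outer orthogonal involution, so applying the collapsing transfer results of \cite{me} requires care in this reducible setting. If needed, a backup route is to derive the good filtration on $\kk[\ol{O}_{f/2,0}]$ from the Mayer--Vietoris short exact sequence
\[0 \to \kk[\ol{O}_{f/2,0}] \to \kk[\ol{O}_{f/2,0}^+] \oplus \kk[\ol{O}_{f/2,0}^-] \to \kk[\ol{O}_{f/2-1,0}] \to 0,\]
whose outer terms have good filtrations by Proposition \ref{prop:special2} and the previous paragraph, and then to apply Proposition \ref{prop:goodgen} directly to the candidate $W$ (whose $U$-invariants should visibly generate $(I_+\cap I_-)^U$ over $\kk[X]^U$). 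Once both hypotheses are in hand, Lemma \ref{lem:reducible} delivers the desired good generators of $I_\pm$.
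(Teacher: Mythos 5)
Your overall plan -- reduce to Lemma \ref{lem:reducible} with $V_\pm = W \oplus (\bigwedge^{f/2}E^*\oo M_\pm)$, computing $V_+\cap V_- = W$ and $V_+ + V_- = W \oplus(\bigwedge^{f/2}E^*\oo\bigwedge^{f/2}F)$ -- is exactly the paper's strategy, and your step 1 (the sum hypothesis via Theorem \ref{thm:mainorth} and Proposition \ref{prop:goodgen} to absorb the redundant summand) is sound. The divergence is in the \emph{ordering}: the paper first fixes $f=2e$, where the intersection hypothesis is immediate from Proposition \ref{prop:orthinv} (there $\ol{O}_{f/2,0}$ \emph{is} the full-rank variety defined by $\psi=0$, cut out by $W=\op{S}_2(E^*)$ with good defining equations by that proposition), applies Lemma \ref{lem:reducible} to get the $f=2e$ result, and only \emph{then} collapses -- and what it collapses are the irreducible components $\ol{O}_{e,0}^\pm$ individually, using \cite[Theorem 1.5]{me} exactly as in Theorems \ref{thm:mainsymp} and \ref{thm:mainorth}. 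You instead try to verify both hypotheses of Lemma \ref{lem:reducible} directly in the general case $f\le 2e$, which forces you into step 2.

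Step 2 is where the gap lies, and you sense it yourself. Your primary route collapses the \emph{reducible} inner variety $\ol{O'}_{f/2,0}\subset \kk^{f/2}\oo F$; whether \cite[Theorem 1.5]{me} (and the cohomology-vanishing machinery behind it) applies as stated to a reducible orbit-closure-union is not something the paper ever needs to invoke, and it is not established here. Your Mayer--Vietoris backup does not fill the gap either: exactness of that sequence requires $I_+ + I_- = I(\ol{O}_{f/2-1,0})$ scheme-theoretically, which you only argued set-theoretically (this is in fact fine once you push through the radical argument of step 1, but you should say so); more seriously, it only produces a good filtration on $\kk[\ol{O}_{f/2,0}]$, and passing from there to ``$W$ gives good generators of the ideal $I_+\cap I_-$'' via Proposition \ref{prop:goodgen} requires knowing that $W^U$ generates $(I_+\cap I_-)^U$ over $\kk[X]^U$, which you flag as ``should visibly'' hold but do not prove. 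This is precisely the hard content that the paper sidesteps by specializing to $f=2e$ and invoking Proposition \ref{prop:orthinv}. The fix is to adopt the paper's order: prove the $f=2e$ (equivalently $e=f/2$) case first, where step 2 is free, then collapse the now-established irreducible components $\ol{O}_{f/2,0}^\pm$ to reach general $f\le 2e$; this way only irreducible varieties are fed into the collapsing transfer and the reducibility concern never arises.
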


\begin{proof}
First, consider the case $f=2e$. Pun $W=\op{S}_2(E^*) \oo \op{triv} \, \subset \kk[X]$, which is spanned by the quadratic $\SO(F)$-invariants. Let $I_\pm$ be the ideals generated by $W \oplus V_\pm$. By the discussion above, the zero-set of $I_\pm$ is $\ol{O}_{e, 0}^{\pm}$. Since the nullcone equals $\ol{O}_{e, 0}^+\bigcup \ol{O}_{e, 0}^{-}$, \, $W$ gives good generators for the (radical) ideal $I_+\cap I_-$, as seen in Proposition \ref{prop:orthinv}. Furthermore, by Theorem \ref{thm:mainorth} we have that $W\oplus V_+ \oplus V_-$ gives good defining equations of $\ol{O}_{e-1,0} = \ol{O}_{e, 0}^+\bigcap \ol{O}_{e, 0}^{-}$. Thus, by Lemma \ref{lem:reducible}, $W\oplus V_\pm$ gives good defining equations of $I_\pm$. Moreover, since $I_+\cap I_-$ and $I_+ + I_-$ are reduced, so are $I_\pm$.

The general case $f\leq 2e$ follows from this by collapsing, as in the proof of Theorem \ref{thm:mainorth}, using \cite[Thoerem 1.5]{me} again.
\end{proof}

\begin{remark}\label{rem:char2}
When $\cha \kk = 2$, $X$ is not good as a $G = \GL(E)\times \op{SO}(F)$-module by \cite[Example 1.]{hashiinv} (for $f$ even). Furthermore, some extra orbits appear as the natural representation $F$ is not an irreducible $\op{SO}(F)$-module when $f$ is odd, due to the bilinear form being degenerate. One additional detail is that one has to further consider the second divided power in (\ref{eq:psi}) instead of $\op{S}_2(E)$ -- see also Lemma \ref{lem:smooth2}. Due to such exceptional phenomena, we leave this characteristic for a separate consideration.
\end{remark}

\subsection*{Acknowledgments}
The author would like to express his gratitude to Jerzy Weyman for his valuable comments and suggestions on this work.

\bibliographystyle{alpha}
\bibliography{biblo}

\end{document}